\tikzset{%
	int/.style={fill=white,opacity=.85,pos=.5, inner sep=.1em,font=\scriptsize},%
	pair/.style={coordinate,shape=circle,draw,scale=.3,fill=white, text=white},%
	solid/.style={coordinate,shape=circle,draw,scale=.3,fill=black},%
}
\newcommand{\cald}{\mathcal{D}}
\newcommand{\calf}{\mathcal{F}}
\newcommand{\calh}{\mathcal{H}}
\newcommand{\frakb}{\mathfrak{b}}
\newcommand{\frakt}{\mathfrak{t}}
\newcommand{\frakB}{\mathfrak{B}}
\newcommand{\fieldstyle}[1]{\mathbb{#1}}
\renewcommand{\AA}{\fieldstyle{A}}
\newcommand{\NN}{\fieldstyle{N}}
\theoremstyle{plain}
\newtheorem{thm}{Theorem}
\newtheorem*{thm*}{Theorem}
\newtheorem{lemma}[thm]{Lemma}
\newtheorem*{cor*}{Corollary}
\theoremstyle{definition}
\newtheorem{dfn}[thm]{Definition}
\newtheorem{ex}[thm]{Example}
\newtheorem{remark}[thm]{Remark}
\newtheorem*{remark*}{Remark}
\theoremstyle{remark}
\newcommand{\IF}{\text{ if }}
\newcommand{\df}[1]{\emph{#1}}
\DeclareMathOperator{\AG}{AG}
\DeclareMathOperator{\cmod}{mod}
\newcommand{\e}{\varepsilon}
\renewcommand{\tilde}{\widetilde}
\newcommand{\alg}{\Lambda}
\renewcommand{\aa}{\alpha}
\newcommand{\bb}{\beta}
\newcommand{\cc}{\gamma}
\newcommand{\dd}{\delta}
\newcommand{\Path}{\Pi}
\newcommand{\te}{\tau}
\newcommand{\xto}[1]{\xrightarrow{\ \ #1\ \ }}
\newcommand{\xlto}[1]{\xleftarrow{\ \ #1\ \ }}
\begin{document}

\title{The AG-invariant for $(m+2)$-angulations}
\author{Lucas David-Roesler}
\date{\today}

\thanks{The author would like to thank Ralf Schiffler and Ben Salisbury for reading 
the very rough drafts of this paper and for their important comments and suggestions.}

\begin{abstract}
In this paper, we study gentle algebras that come from $(m+2)$-angulations of unpunctured 
Riemann surfaces with boundary and marked points. 
We focus on calculating a derived invariant introduced by Avella-Alaminos and Geiss,
generalizing previous work done when $m=1$. In particular, we provide a method for calculating
this invariant based on the the configuration of the arcs in the $(m+2)$-angulation, the 
marked points, and the boundary components. 
\end{abstract}

\maketitle

\section{Introduction}

The derived equivalence classes of $m$-cluster-tilted algebras of type $\AA$ were
determined in \cite{Mur-PRE} using the cycles with full relations in the bound quiver
$(Q,I)$ as an invariant. Recently, in \cite{BG-PRE}, the Hochschild cohomology and an
invariant of Avella-Alaminos and Geiss \cite{AG}, the AG-invariant, were used to
describe all connected algebras derived equivalent to a connected component of an
$m$-cluster-tilted algebra of type $\AA$. Given an algebra $\alg$, both
\cite{BG-PRE,Mur-PRE} make use of a normal form $N_{r,s}$ associated to the bound
quiver $(Q,I)$ of $\alg$. In particular, \cite{BG-PRE} calculates the AG-invariant of
$\alg$ using the normal form quiver $N_{r,s}$. This method generalizes the method for
calculating the AG-invariant for the iterated tilted algebras of type $\AA$ given in
\cite{DS}. We will make use of the description of $m$-cluster-tilted algebra as
arising from the $(m+2)$-angulation of an $(n+1)m+2$-gon $(P,M,T)$.

To state the main theorem we need to define certain sets. Call $M$ the set of marked
points, one for each vertex of the polygon $P$. The set $T$ is the collection of
diagonals in the $(m+2)$-angulation. We identify are particular subset $M_T$ if $M$
consisting of those marked points incident to at least one diagonal in $T$. Denote by
$\frakB$ the set of boundary segments $\frakb$ which are demarcated by the
elements in $M_T$. Finally, set 
\[(a,b)^* = f(x,y) =
\begin{cases}
1 & (x,y) = (a,b),\\
0 & \text{ otherwise}.
\end{cases}\]

\begin{thm*}\label{mainthm}
Let $\alg$ be a connected algebra associated to the $(m+2)$-angulation $(P,M,T)$.  
The AG-invariant of $\alg$ is the function $(a,b)^* + t(0,m+2)^*$ where
	\[ a = \# M_T, \quad \text{and}\quad b = \sum_{\frakB}(m-w(\frakb_i))\]
with $w(\frakb_i) = \#(\frakb_i\cap M\setminus M_T)$ and $t$ is the number of internal
$(m+2)$-gons in $(P,M,T)$. 
\end{thm*}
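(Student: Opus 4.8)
The plan is to reduce the computation of the AG-invariant to a combinatorial count on the $(m+2)$-angulation by translating the bound quiver $(Q,I)$ of $\alg$ directly into the surface data $(P,M,T)$, and then running the Avella-Alaminos--Geiss algorithm in that language. First I would recall the dictionary between the $(m+2)$-angulation $(P,M,T)$ and the gentle algebra $\alg$: the vertices of $Q$ are the diagonals in $T$, the arrows correspond to pairs of diagonals bounding a common $(m+2)$-gon and sharing a marked point (with the natural counterclockwise ordering inside each cell), and the relations in $I$ are exactly the paths of length two that turn the ``wrong way'' inside an internal $(m+2)$-gon. With this in hand, the AG-invariant is computed by tracing the permitted threads (maximal directed paths avoiding relations) and forbidden threads (maximal sequences of relations), pairing them up as in \cite{AG}, and recording for each resulting cycle the pair (number of arrows, number of relations) traversed.

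Next I would set up the key bijections. The AG-algorithm alternately follows a maximal nonzero path and then ``jumps'' along a maximal relation path; the resulting closed walks are in bijection with certain geometric objects in $(P,M,T)$. My claim is that the nontrivial closed walks correspond to the marked points in $M_T$: going around a vertex of the quiver corresponds to rotating around a marked point incident to some diagonal, with the permitted portions of the walk recording the ``fan'' of diagonals at that marked point and the forbidden portions recording the boundary segments and internal cells encountered. This should give $a=\#M_T$, one contribution per such marked point. For the second coordinate $b$, I would carefully bookkeep, for each boundary segment $\frakb_i$ demarcated by points of $M_T$, how many arrows of $Q$ are ``skipped'' because of marked points of $M\setminus M_T$ lying on $\frakb_i$ — each such skipped marked point reduces by one the length $m$ one would otherwise see, yielding the summand $m-w(\frakb_i)$. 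The internal $(m+2)$-gons are the only source of the ``short'' closed walks of type $(0,m+2)$: each internal cell contributes a cycle that follows $m+2$ relations and no arrows (the full relation cycle around that cell), which accounts for the term $t(0,m+2)^*$, exactly as the cycles with full relations play the analogous role in \cite{Mur-PRE}.

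Finally I would assemble these pieces: show that the permitted and forbidden threads are exhausted by the walks described above (no others occur, using connectedness of $\alg$ and the fact that $(P,M,T)$ is a triangulation-type dissection of a disk, so there are no ``interior'' closed threads beyond those around marked points and internal cells), and verify that the AG-algorithm's pairing of threads matches the geometric pairing. The main obstacle I anticipate is the careful combinatorial accounting in the mixed case — a boundary segment in $\frakB$ whose endpoints are in $M_T$ but which carries several points of $M\setminus M_T$, interacting with one or more adjacent internal cells — where one must check that the arrows contributed to a permitted thread, the relations contributed to the neighboring forbidden threads, and the $w(\frakb_i)$-many skipped points all tally correctly; degenerate configurations (a single diagonal, an $(m+2)$-gon with all sides on the boundary, nested fans at a marked point) will need to be handled as separate small cases to confirm the formula, including the normalization that reproduces the known $m=1$ result of \cite{DS}.
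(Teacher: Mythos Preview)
Your core bijections are essentially the paper's Lemmas: permitted threads $\leftrightarrow$ points of $M_T$ (the ``fan'' at a marked point is exactly a maximal non-zero path), and forbidden threads $\leftrightarrow$ elements of $\frakB$ with $\ell(F)=m-w(\frakb)$ (each non-internal $(m+2)$-gon contains one boundary segment, and its interior angles give a maximal relation path). The internal $(m+2)$-gons contributing $(0,m+2)$ is also right. So far this is the paper's approach.

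Where you go wrong is in identifying the \emph{closed walks} of the AG algorithm. You write that ``the nontrivial closed walks correspond to the marked points in $M_T$'' and describe the walk as ``rotating around a marked point''. That is not what happens: a single marked point gives a single permitted thread, not a closed walk. If each $p\in M_T$ produced its own cycle you would obtain $\#M_T$ many pairs of the form $(1,\cdot)^*$, not the single pair $(\#M_T,b)^*$ the theorem asserts. The missing idea is that the transition maps $\varphi$ (permitted $\to$ forbidden) and $\psi$ (forbidden $\to$ permitted) of the AG algorithm both act by moving one step \emph{counter-clockwise along $\partial P$}: from $p\in M_T$ to the adjacent boundary segment $\frakb$, then from $\frakb$ to the next point of $M_T$. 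Because $P$ is a disc with a single boundary circle, iterating $\psi\circ\varphi$ cycles through \emph{all} of $M_T$ before returning, producing exactly one pair $(a,b)$ with $a=\#M_T$ and $b=\sum_\frakB (m-w(\frakb_i))$. This is the content of the paper's two ``transition'' lemmas, and it is what you need to replace the ``rotating around a marked point'' picture with. (Minor point: the pair recorded by the algorithm is (number of permitted threads in the cycle, total length of forbidden threads), not ``(number of arrows, number of relations)'' as you wrote.)
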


When $m=1$, this calculation recovers Theorem~4.6 from \cite{DS}. Additionally, in
\cite{DS} a class of algebras called surface algebras is introduced via a process called
admissible cutting of the surface. When the surface is a disc (that is, when we consider a
triangulation of a polygon) this construction produces the iterated tilted algebras of
type $\AA$ with global dimension 2. We will show how to realize these algebras as
$m$-cluster-tilted algebras of type $\AA$. This realization agrees with Corollary~6.6 in
\cite{BG-PRE}.

After considering the disc the next step is to consider those surfaces with multiple
boundary components. When considering triangulations, the immediate benefit is that
cluster-tilted algebras of affine type $\tilde\AA$ arise as triangulations of the
annulus. In \cite{DS}, the calculation of the AG-invariant is done for any surface
with any number of boundary components. Similarly, we extend the main theorem to
those surfaces with more than one boundary component. This work also shows how to
deal with $(m+2)$-angulations that are are not connected.

To calculate the AG-invariant for arbitrary $(m+2)$-angulations $(P,M,T)$, we introduce 
the concept of boundary bridges in section~\ref{general}. We then define a new 
$(m+2)$-angulation $(\tilde P,\tilde M,\tilde T)$ by removing these boundary
bridges. This operation does not disturb the arcs in $T$, so 
$(Q_{\tilde T},I_{\tilde T}) = (Q_T,I_T)$. Using this new surface we can calculate 
the AG-invariant as before. 

\begin{thm*}
Let $(P,M,T)$ be any $(m+2)$-angulation and $\alg$ the corresponding algebra. 
The AG-invariant of $\alg$ is given by
\[t(0,m+2)^* + \sum_{i}(a_i,b_i)^* \] 
where $t$ is number of internal $(m+2)$-gons in $(P,M,T)$, 
$i$ indexes the boundary components of $\tilde P$, and 
\[ a_i = \# \tilde M_T^i, \quad \text{and}\quad b_i= \sum_{\frakb_j\in \tilde\frakB_i}(m-w(\frakb_j)).\]
\end{thm*}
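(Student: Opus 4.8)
The plan is to bootstrap from the first main theorem by two reductions and then a geometric reading of the Avella--Alaminos--Geiss algorithm. First I would reduce to the bridge-free case: by the construction of Section~\ref{general} the passage from $(P,M,T)$ to $(\tilde P,\tilde M,\tilde T)$ leaves every arc of $T$ untouched, so $(Q_{\tilde T},I_{\tilde T})=(Q_T,I_T)$ and $\alg$, together with its AG-invariant, is unchanged. One must additionally observe that an internal $(m+2)$-gon has no boundary side and is therefore unaffected by the surgery, so the integer $t$ is the same for $(P,M,T)$ and $(\tilde P,\tilde M,\tilde T)$. It thus suffices to prove the formula for bridge-free angulations. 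Next I would dispose of disconnectedness: if $(P,M,T)$ is a disjoint union $\bigsqcup_j(P_j,M_j,T_j)$ then $\alg\cong\prod_j\alg_j$, the AG-invariant is additive over direct products directly from its definition in \cite{AG}, and both the internal $(m+2)$-gons and the boundary components of $\tilde P$ are the disjoint unions of those of the pieces; so we may assume $(P,M,T)$ is connected and bridge-free.

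In this setting I would run the AG-algorithm on $(Q_T,I_T)$, reading each step off the angulation exactly as in the proof of the first main theorem: arrows of $Q_T$ are the angles between consecutive arcs of $T$ inside an $(m+2)$-gon, and a length-two path lies in $I_T$ precisely when its two angles lie in the same tile. Following the alternation of the algorithm between travelling along a string and crossing a relation, each closed walk it produces should be confined either to a single internal $(m+2)$-gon --- where it runs once around the tile, a full-relation cycle of length $m+2$, contributing $(0,m+2)^*$ --- or to a single boundary component of $P$, where it traces the component once and returns the pair $(a,b)$ with $a$ equal to the number of arc endpoints on that component and $b=\sum(m-w(\frakb))$; here $w(\frakb)$ records the marked points of $M\setminus M_T$ interior to the boundary segment $\frakb$, which shorten the forbidden path along $\frakb$. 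Summing over the $t$ internal tiles and over the boundary components yields the asserted expression $t(0,m+2)^*+\sum_i(a_i,b_i)^*$, with the per-component count of arc endpoints being $\#\tilde M_T^i$ and the per-component boundary sum being $\sum_{\frakb_j\in\tilde\frakB_i}(m-w(\frakb_j))$.

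The step I expect to be the main obstacle is the confinement claim: one must show that for a bridge-free angulation no walk of the AG-algorithm passes from one boundary component to another. A boundary bridge is exactly the configuration that would allow such a transition and would therefore merge two would-be summands $(a_i,b_i)^*$ and $(a_j,b_j)^*$ into a single incorrect pair, which is why the surgery of Section~\ref{general} is needed and why $i$ ranges over the boundary components of $\tilde P$ rather than of $P$. Once confinement is established, the evaluation of the walk around a single boundary component is precisely the computation already carried out in the one-boundary-component case treated by the first main theorem, now applied componentwise, and the internal-tile cycles are handled as there; so the remaining work is routine bookkeeping rather than new ideas.
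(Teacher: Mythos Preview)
Your proposal is correct and follows essentially the paper's own route: reduce to the non-degenerate (bridge-free) case using that the surgery leaves $(Q_T,I_T)$ and the internal tiles untouched, then invoke the per-boundary-component computation (Theorem~\ref{main thm 2}), which in turn is the word-for-word extension of Lemmas~\ref{lem a}--\ref{lem 2} from the disc. The ``confinement claim'' you single out as the main obstacle is in fact immediate from those local lemmas---since $\varphi$ and $\psi$ each move one step counter-clockwise along the same boundary component in a non-degenerate angulation, every AG-cycle is trapped on a single component---so no new argument is needed there; your explicit handling of the disconnected case via additivity of $\AG$ is a point the paper leaves implicit.
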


In this theorem $\tilde M_T^i$ and $\frakB_i$ are defined as before but restricted to the 
$i$th boundary component. Note that if there are no boundary bridges in $(P,M,T)$, then
we set $(\tilde P,\tilde M, \tilde T) = (P,M,T)$.

\section{Preliminaries}

\subsection{Gentle algebras}
Let $k$ be an algebraically closed field.
Recall from \cite{AS} that a finite-dimensional algebra $\alg$ is {\em gentle} if it
admits a presentation $\alg=kQ/I$ satisfying the following {\nobreak conditions:}
\begin{itemize}
\item[(G1)] At each point of $Q$ there starts at most two arrows and stops at
  most two arrows.
\item[(G2)] The ideal $I$ is generated by paths of length 2.
\item[(G3)] For each arrow $\bb$ there is at most one arrow $\aa$
  and at most one arrow $\cc$ such that $\aa \bb \in I$
  and $\bb \cc \in I$.
\item[(G4)] For each arrow $\bb$ there is at most one arrow $\aa$
  and at most one arrow $\cc$ such that $\aa \bb \not\in I$
  and $\bb \cc \not\in I$.
\end{itemize}

An algebra $\alg=kQ/I$ where $I$ is generated by paths and $(Q,I)$ satisfies the two
conditions (G1) and (G4) is called a \df{string algebra} (see \cite{BR}), thus every
gentle algebra is a string algebra.

\subsection{The AG-invariant}
We recall from \cite{AG} the definition of the derived invariant of Avella-Alaminos
and Geiss. From this point on called the AG-invariant. Let $\alg$ be a gentle
$k$-algebra with bound quiver $(Q,I)$, $Q=(Q_0,Q_1,s,t)$ where $s,t\colon Q_1\to Q_0$ are
the source and target functions on the arrows.

\begin{dfn}
A \df{permitted path} of $\alg$ is a path $H=\aa_1\aa_2\cdots \aa_n$ in $Q$ which is not in $I$.
We say a permitted path is a \df{non-trivial permitted thread} of $\alg$ if for all arrows
$ \bb\in Q_1$, neither $ \bb H$ nor $H \bb$ is a permitted path. These are the
`maximal' permitted paths of $\alg$. Dual to this, we define the \df{forbidden paths} of
$\alg$ to be a sequence $F= \aa_1\aa_2\cdots \aa_n$ in $Q$ such that $\aa_i\ne \aa_j$ unless
$i=j$, and $\aa_i\aa_{i+1}\in I$, for $i=1,\dots,n-1$. A forbidden path $F$ is a
\df{non-trivial forbidden thread} if for all $ \bb\in Q_1$, neither $ \bb F$ or $F \bb$ is
a forbidden path. We also require \df{trivial permitted} and \df{trivial forbidden
threads}. Let $x\in Q_0$ such that there is at most one arrow starting at $x$ and at most
one arrow ending at $x$. Then the constant path $e_x$ is a trivial permitted thread if
when there are arrows $ \bb, \cc\in Q_1$ such that $s( \cc)=x=t( \bb)$, then $ \bb
\cc\not\in I$. Similarly, $e_x$ is a trivial forbidden thread if when there are arrows $
\bb, \cc\in Q_1$ such that $s( \cc)=x=t( \bb)$, then $ \bb \cc\in I$.
  
Let $\calh$ denote the set of all permitted threads and $\calf$ denote the set of all
forbidden threads.
\end{dfn}

Notice that each arrow in $Q_1$ is both a permitted and a forbidden path. Moreover, the
constant path at each sink and at each source will simultaneously satisfy the definition
for a permitted and a forbidden thread because there are no paths going through $x$.

We fix a choice of functions $\sigma,\e\colon Q_1\to \{-1,1\}$ characterized by the
following conditions.
\begin{enumerate}
\item If $ \aa_1\neq  \aa_2$ are arrows with $s( \aa_1)=s( \aa_2)$, then 
	$\sigma( \aa_1)=-\sigma( \aa_2)$.
\item If $ \aa_1\neq  \aa_2$ are arrows with $t( \aa_1)=t( \aa_2)$, then 
$\e( \aa_1)=-\e( \aa_2)$.
\item If $ \aa, \bb$ are arrows with $s( \bb)=t( \aa)$ and $ \aa \bb\not\in I$, 
then $\sigma( \bb)=-\e( \aa)$.
\end{enumerate}
Note that the functions need not be unique. Given a pair $\sigma$ and $\e$, we can define
another pair $\sigma':=-1\sigma$ and $\e':=-1\e$.

These functions naturally extend to paths in $Q$. Let $\Path = \aa_1\aa_{2}\cdots
\aa_{n-1}\aa_n$ be a path. Then $\sigma(\Path) = \sigma(\aa_1)$ and $\e(\Path)=\e(\aa_n)$.
We can also extend these functions to trivial threads. Let $x,y$ be vertices in $Q_0$,
$h_x$ the trivial permitted thread at $x$, and $p_y$ the trivial forbidden thread at $y$.
Then we set
\begin{align*} 
\sigma(h_x) = -\e(h_x) &= -\sigma(\aa), & \IF& s(\aa)=x, \text{ or}\\
 \sigma(h_x) = -\e(h_x) &= -\e( \bb), & \IF& t( \bb)=x
 \end{align*}
and
\begin{align*}
\sigma(p_y) = \e(p_y)& = -\sigma( \cc), &  \IF& s( \cc) = y, \text{ or}\\
\sigma(p_y) =\e(p_y)&  = -\e(\dd), & \IF& t(\dd) = y ,
\end{align*} 
where $\aa, \bb, \cc,\dd\in Q_1$. Recall that these arrows are unique if they exist.

\begin{dfn}
The AG-invariant $\AG(\alg)$ is defined to be a function depending on the ordered pairs
generated by the following algorithm.
\begin{enumerate}
\item \begin{enumerate}
	\item Begin with a permitted thread of $\alg$, call it $H_0$.
	\item \label{alg:HtoF} To $H_i$ we associate $F_i$, the forbidden thread which ends at 
		$t(H_i)$ and such that  $\e(H_i)=-\e(F_i)$. Define $\varphi(H_i) := F_i$.
	\item \label{alg:FtoH}To $F_i$ we associate $H_{i+1}$, the permitted thread which starts 
		at $s(F_i)$ and such  that $\sigma(F_i)=-\sigma(H_{i+1})$. Define $\psi(F_i):= H_{i+1}$.
	\item Stop when $H_n=H_0$ for some natural number $n$.  Define $m=\sum_{i=1}^n \ell(F_i)$, 
		where $\ell(C)$  is the length (number of arrows) of a path $C$. In this way we obtain the 
		pair $(n,m)$. 
	\end{enumerate}
\item Repeat (1) until all permitted threads of $A$ have occurred.
\item For each oriented cycle in which each pair of consecutive arrows form a relation, we
associate the ordered pair $(0,n)$, where $n$ is the length of the cycle.
\end{enumerate}
We define $\AG(\alg)\colon \NN^2\to \NN$ where $\AG(\alg)(n,m)$ is the number of times the ordered pair 
$(n,m)$ is formed by the above algorithm. 
\end{dfn}

\begin{ex}
Let $(Q,I)$ be the following quiver:
\[\begin{tikzpicture}[scale=.66]
\node[name=1] at (0,0) {$1$};
\node[name=2] at (3,0) {$2$};
\node[name=4] at (1.5,1.5) {$4$};
\node[name=3] at (0,3) {$3$};
\node[name=5] at (3,3) {$5$};
\path[->]
	(1) edge[bend left] node[int] {$\aa_1$} (3)
	(2) edge node[int] {$\aa_2$} (1)
	(3) edge node[int] {$\aa_3$} (4)
	(4) edge node[int] {$\aa_5$} (5)
	    edge node[int] {$\aa_4$} (2);
\path[dashed]
	(3) edge[bend right] (5)
	(4) edge[bend left] (1);
\end{tikzpicture}\]
where $I= \langle \aa_3\aa_5, \aa_4\aa_2\rangle$.
Then the permitted threads are
\[\calh = \{\aa_2\aa_1\aa_3\aa_4,\aa_5,e_1,e_3,e_5\}\]
and the forbidden threads are
\[\calf = \{\aa_4\aa_2,\aa_3\aa_5,\aa_1,e_2,e_5\}.\]
Notice that $e_5$ is both a permitted and forbidden trivial thread.  When necessary, 
we will use the notation $h_x$ and $p_x$ to distinguish when we consider $e_x$ a
permitted or forbidden thread respectively. 

 We can define the 
functions $\sigma$ and $\e$ such that on the threads of $(Q,I)$ we have:
\[
\begin{tabular}{>{$}r<{$}>{$}r<{$}>{$}r<{$}}
\calh & \sigma & \e \\ \hline
\aa_1 &  1 & -1 \\ %
\aa_2 &  1 & -1 \\ %
\aa_3 &  1 &  1 \\ %
\aa_4 & -1 & -1 \\ %
\aa_5 &  1 &  1 \\
\end{tabular}
\hspace{2cm}
\begin{tabular}{>{$}r<{$}>{$}r<{$}>{$}r<{$}}
\calh & \sigma & \e \\ \hline
\aa_2\aa_1\aa_3\aa_4 & 1 & 1 \\
\aa_5 & 1 & 1 \\
h_5 & 1 & -1\\
h_1 & -1 & 1\\
h_3 & -1 & 1\\
\end{tabular}
\hspace{2cm}
\begin{tabular}{>{$}r<{$}>{$}r<{$}>{$}r<{$}}
\calf & \sigma & \e \\ \hline
\aa_4\aa_2 & -1 & -1\\
\aa_3\aa_5 & 1 & 1\\
\aa_1 & 1 & -1\\
p_2 & -1 & -1\\
p_5 & -1 & -1\\
\end{tabular}
\]

Then the calculation of the AG-invariant is given in the following tables:

\[ 
\begin{tabular}{r>{$}c<{$}>{$}c<{$}}
  & H_i & F_i    \\ \hline
0 & h_5 & \aa_3\aa_5 \\
1 & h_3 & \aa_1 \\
2 & h_1 & \aa_4\aa_2 \\
3 & \aa_5 & p_5 \\
4 & h_5 & \\ \hline
  & (4,5) &
\end{tabular}
\hspace{2cm}
\begin{tabular}{r>{$}c<{$}>{$}c<{$}}
  & H_i & F_i    \\ \hline
0 & \aa_2\aa_1\aa_3\aa_4 & p_2  \\
1 & \aa_2\aa_1\aa_3\aa_4 &  \\ \hline
  & (1,0) &\\
\end{tabular}
\]
In this case we have
\[
\AG(\alg)(n,m) = (1,0)^* + (4,5)^*=
\begin{cases}
1 \text{; if } (n,m) = (1,0) \text{ or } (4,5),\\
0 \text{; otherwise}.
\end{cases}
\]
\end{ex}

The algorithm defining $\AG(\alg)$ can be thought of as dictating a walk in the quiver $Q$,
where we move forward on permitted threads and backward on forbidden threads, see~\cite{AG}.

\begin{remark}\label{rmk:bijection}
Note that the steps \eqref{alg:HtoF} and \eqref{alg:FtoH} of this algorithm give two
different bijections $\varphi$ and $\psi$ between the set of permitted threads $\calh$ and
the set of forbidden threads which do not start and end in the same vertex. We will often
refer to the permitted (respectively forbidden) thread ``corresponding'' to a given
forbidden (respectively permitted) thread. This correspondence is referring to the
bijection $\varphi$ (respectively $\psi$).
\end{remark}

\subsection{$m$-cluster-tilted algebras}

Cluster categories were introduced in \cite{BMRRT}. Quickly after, the notion of
cluster-tilted algebras were introduced and studied in \cite{ABCP,ABS,BB,BMR} to name
only a few. This construction was then generalized to $m$-cluster categories and
$m$-cluster-titled algebras in \cite{Th, K05,FPT-PRE} among others. Roughly, the
$m$-cluster category is defined as the orbit category $\cald^b(\cmod
kQ)/\langle\tau^{-1}[m]\rangle$ where $Q$ is an acyclic quiver. The
$m$-cluster-tilted algebras are then defined as the endomorphism algebras of tilting
objects in this category. For a complete description of this development we recommend
\cite{FPT-PRE,Th}. We focus on the combinatorial description of $m$-cluster categories
and the corresponding $m$-cluster-tilted algebras given via $(m+2)$-angulations which has
been studied in \cite{BM1,BT,Mur-PRE}, we will generally adapt the definitions found
in \cite{Mur-PRE}.

Let $P$ be a disc with boundary and $M$ be a finite set contained in $\partial P$. Note that
$(P,M)$ is equivalent to a polygon with $\# M$ edges. It is common to simply take a
polygon, but we prefer the generality of the language of a surface with marked points.

\begin{dfn}
An $m$-allowable diagonal in $(P,M)$ is a chord joining two non-adjacent points in $M$
such that $(P,M)$ is divided into two smaller polygons $P_1$ and $P_2$ which can themselves
be divided into $(m+2)$-gons by non-crossing chords. 
\end{dfn}

\begin{dfn}
The collection $(P,M,T)$ is called an \emph{$(m+2)$-angulation} of $(P,M)$ if $T$ is a maximal 
collection of $m$-allowable diagonals. We denote a $(m+2)$-gon in $(P,M,T)$ by $\triangle$. 
\end{dfn}

In \cite{Mur-PRE}, it is a simple lemma that $(P,M)$ can be divided into an $(m+2)$-angulation
if and only if $\#M\equiv 2\cmod m$.

\begin{dfn}\label{def Qt}
To a $(m+2)$-angulation $(P,M,T)$ we associate a quiver with relations $(Q_T,I_T)$.
The vertices of $Q_T$ are in bijection with the elements of $T$.  For any two vertices
$x$ and $y$ in $Q_T$ we have an arrow $x\to y$ if and only if:
\begin{enumerate}
\item the corresponding $m$-allowable diagonals $\te_x$ and $\te_y$ share a vertex in $(P,M)$,
\item $\te_x$ and $\te_y$ are edges of the same $(m+2)$-gon $\triangle$ in the $(P,M,T)$,
\item $\te_y$ follows $\te_x$ in the counter-clockwise direction (as you walk around the boundary
 of $\triangle$).
\end{enumerate}
\end{dfn}

\begin{ex}\label{ex Qt}
The quiver associated to the $(m+2)$-angulation given in Figure~\ref{fig M and B} is\
\[
1 \xto{\aa_1} 2 \xto{\aa_2} 3 \xlto{\aa_3} 4 \xlto{\aa_4} 5 \xto{\aa_5} 6 \xto{\aa_6} 7
\]
with $I_T = \langle \aa_1\aa_2,\aa_5\aa_6\rangle$. 
\end{ex}

In much of the literature, people choose either the counter-clockwise orientation or the
clockwise orientation. The choice does not affect the final results of the theory  but 
should be carefully noted when doing calculations, choosing to use the clockwise orientation 
will produce $Q_T^{\mathrm{op}}$.  Some of the lemmas in the following section will depend on 
this choice of direction but can easily be restated in the clockwise direction.

\section{Calculating the AG-invariant}
\label{calculation}

\begin{dfn}\label{def ag pieces}
Let $\alg$ be the $m$-cluster-tilted algebra associated to the $(m+2)$-angulation $(P,M,T)$.  
Let
 \[M_T := \{p\in M : p \text{ is incident to } T\},\]
and 
 \[\frakB=\{\frakb_1,\dots,\frakb_r\}\] 
be the pieces of the boundary component such that the endpoints of $\frakb_i$ are 
in $M_T$ and each $\frakb_i$ does not contain any other points of $M_T$.  Further, let 
$w(\frakb_i)$ be the number of marked points on $\frakb_i$ not contained in $M_T$. That is
\[ w(\frakb_i) = \#((M\cap \frakb_i)\setminus M_T).\]
\end{dfn}

\begin{ex}
In Figure~\ref{fig M and B} the set $M_T$ is given by the white marked points.
\begin{figure}
\centering
\begin{tikzpicture}[scale=.33]
\draw (0,0) circle[radius=5cm];
\foreach \a in {0,20,...,340}
 \node[solid,name=n\a] at (\a:5cm) {};

\draw
 (n0) -- (n60) node[int] {$\te_1$}
 (n60) -- (n120) node[int] {$\te_2$}
 (n120) -- (n340) node[int] {$\te_3$}
 (n340) -- (n160) node[int] {$\te_4$}
 (n340) -- (n200) node[int] {$\te_5$}
 (n200) -- (n260) node[int] {$\te_6$}
 (n260) -- (n320) node[int] {$\te_7$};
\foreach \a in {0,60,120,160,340,200,260,320}
	\node[pair] at (\a:5cm) {};
\node[right] at (30:5cm) {$\frakb_1$};
\node[above] at (90:5cm) {$\frakb_2$};
\node[left] at (140:5cm) {$\frakb_3$};
\node[left] at (180:5cm) {$\frakb_4$};
\node[below] at (230:5cm) {$\frakb_5$};
\node[below] at (300:5cm) {$\frakb_6$};
\node[right] at (330:5cm) {$\frakb_7$};
\node[right] at (350:5cm) {$\frakb_8$};
\end{tikzpicture}
\caption{The white points indicate elements of $M_T$.  The boundary segments have 
weights $w(\frakb_1)=w(\frakb_2)=w(\frakb_5)=w(\frakb_6) = 2$, $w(\frakb_3) =  w(\frakb_4) = 1$, 
and $w(\frakb_7)=w(\frakb_8)=0$.}
\label{fig M and B}
\end{figure}
\end{ex}

\begin{lemma}\label{lem a}
Let $\alg$ be the algebra associated to the $(m+2)$-angulation $(P,M,T)$. The permitted
threads of $\alg$ are in bijection with $M_{T}$
\end{lemma}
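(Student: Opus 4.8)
The plan is to set up an explicit bijection between permitted threads $H \in \calh$ of $\alg$ and marked points $p \in M_T$, by tracking where each permitted thread "starts" geometrically in the polygon $(P,M,T)$. The key observation is that a permitted thread, being a maximal path avoiding the relations in $I_T$, should correspond to a maximal "fan" of diagonals turning consistently around a single marked point. Recall from Definition~\ref{def Qt} that an arrow $x \to y$ records that $\te_x$ and $\te_y$ are consecutive edges of a common $(m+2)$-gon $\triangle$ sharing a vertex, with $\te_y$ following $\te_x$ counterclockwise; and from the gentle structure coming from $(m+2)$-angulations, the relation $\aa\bb \in I_T$ holds precisely when the shared vertex of the diagonals changes as you pass from $\aa$ to $\bb$ (equivalently, $\aa$ and $\bb$ do not lie in a common $(m+2)$-gon). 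So a permitted path $\aa_1 \cdots \aa_n$ is exactly a sequence of diagonals all incident to one fixed marked point $p$, rotating counterclockwise around $p$ through consecutive $(m+2)$-gons.

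First I would make precise the correspondence on the level of non-trivial permitted threads: given $p \in M_T$ incident to at least one diagonal, list the diagonals at $p$ in counterclockwise order $\te_{i_0}, \te_{i_1}, \dots, \te_{i_k}$ (those that actually occur as one rotates from one boundary segment at $p$ to the next), and check that $\aa_1 \cdots \aa_k$ with $\aa_j \colon i_{j-1} \to i_j$ is a permitted path, that it is maximal (prepending or appending an arrow would force leaving the fan at $p$, hence hit a relation), and that this path is non-trivial exactly when $k \geq 1$, i.e. when $p$ meets at least two diagonals. Conversely, every non-trivial permitted thread arises this way from a unique $p$, since the source of the thread determines the first diagonal and the rotation direction determines $p$ uniquely. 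Then I would handle the trivial permitted threads $e_x$: these occur at vertices $x$ of $Q_T$ with at most one arrow in and one arrow out, and the trivial-thread condition $\bb\cc \notin I_T$ translates into the diagonal $\te_x$ being, at one of its two endpoints, the "lonely" diagonal — i.e. $\te_x$ is a boundary-like diagonal at a marked point $p$ meeting no other diagonal through that $(m+2)$-gon corner. I need to verify that such trivial threads also biject with the remaining points of $M_T$ not already accounted for by non-trivial threads, and that the counts match globally.

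The cleanest way to organize this is probably a counting/consistency argument rather than a point-by-point matching: show that the total number of permitted threads equals $\# M_T$ by using the fact (from the gentle structure) that permitted threads partition the arrow set $Q_1$ together with the "defect" vertices, then compare with the combinatorics of how many diagonals meet each $p \in M_T$ and how many $(m+2)$-gons surround it. Alternatively, and more robustly, I would argue directly that the assignment $H \mapsto (\text{the marked point } p \text{ around which } H \text{ rotates})$ is well-defined (each permitted thread, trivial or not, rotates around exactly one $p \in M_T$) and is both injective (the starting diagonal plus the turning direction recovers the full thread) and surjective (each $p \in M_T$ produces its fan, which is a genuine maximal permitted path or, in the degenerate one-diagonal case, a trivial thread).

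The main obstacle I expect is the careful bookkeeping at the trivial threads and at the "wrap-around" corners: when a marked point $p$ is incident to diagonals that, together with boundary segments, completely surround $p$ versus the case where boundary segments interrupt the fan, the maximality condition and the trivial-thread condition behave differently, and one must be sure that exactly one permitted thread is produced per marked point in $M_T$ — not zero, not two. In particular, a vertex $x$ of $Q_T$ can simultaneously be a source and a sink (giving a trivial thread that is both permitted and forbidden, as in the running example), and one needs the precise definition of trivial permitted thread — the condition $\bb\cc \notin I_T$ when such $\bb,\cc$ exist — to pin down at which of its two endpoints the diagonal $\te_x$ "contributes" its thread. Disentangling these corner cases, using the explicit local picture of an $(m+2)$-gon fan around a marked point, is the crux; once the local dictionary (diagonal-fan at $p$ $\leftrightarrow$ permitted thread) is established, both injectivity and surjectivity are immediate.
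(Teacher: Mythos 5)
Your proposal takes essentially the same route as the paper: arrows correspond to angles of $(m+2)$-gons, two arrows compose to a nonzero path exactly when their angles sit at the same marked point, and hence permitted threads are precisely the maximal fans of diagonals around points of $M_T$ (the paper's proof is a terser version of this, leaving the trivial-thread bookkeeping implicit). One small correction: your parenthetical characterization of the relations is backwards --- $\aa\bb\in I_T$ precisely when the two angles \emph{are} neighboring angles of a common $(m+2)$-gon (equivalently, the shared vertex changes, as your main clause correctly states), not when they fail to lie in a common $(m+2)$-gon; your subsequent argument uses the correct version, so nothing downstream breaks.
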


\begin{proof}
This follows from the definition of $Q_T$. By construction, the arrows of $Q_T$ are given
by the angles of the $(m+2)$-gons in $(P,M,T)$, further arrows $\aa$ and $\bb$ are
composable if and only if the angles defining $\aa$ and $\bb$ are incident to each other at
the same marked point. It follows that to any permitted thread we can associate a marked
point in $M_T$. Conversely, given a marked point in $p\in M_T$, we can associate a sequence
of arrows $H$ defined by the angles incident to $p$. This sequence must define a permitted
thread, since any other arrows that we may consider to compose with $H$ must come from
angles not incident to $p$. Hence the composition is zero by the definition of $I_T$.
Notice that the trivial forbidden threads are given by marked points incident to a unique
edge in $T$.
\end{proof}

\begin{ex}
Applying Lemma~\ref{lem a} to Figure~\ref{fig M and B} and using the labels on the corresponding
quiver given in  Example~\ref{ex Qt} we get the following list of permitted threads written in
counter-clockwise order based on the corresponding marked point:
\[ \aa_4\aa_3, e_1,\aa_1,\aa_2,e_4,\aa_5,\aa_6,e_7.\]
\end{ex}

\begin{lemma}\label{lem b}
Let $\alg$ be the algebra associated to the connected $(m+2)$-angulation $(P,M,T)$. The
forbidden threads of $\alg$ are in bijection with $\frakB$. Further, if $F\in\calf$ is
associated to $\frakb_i\in\frakB$, then $\ell(F) = m - w(\frakb_i)$.
\end{lemma}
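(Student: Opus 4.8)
The plan is to mimic the proof of Lemma~\ref{lem a}, but now tracking the combinatorial data attached to the \emph{boundary} of $(P,M)$ rather than to the marked points incident to $T$. Recall that forbidden threads are maximal sequences of arrows in which each consecutive pair lies in $I_T$; by Definition~\ref{def Qt}, a relation $\aa\bb\in I_T$ occurs precisely when the diagonals $\te_x,\te_y,\te_z$ underlying $\aa\colon x\to y$ and $\bb\colon y\to z$ are three consecutive edges of a single $(m+2)$-gon $\triangle$ read counter-clockwise, so that $\te_x$ and $\te_z$ are the two edges of $\triangle$ adjacent to $\te_y$. Thus a (non-trivial) forbidden thread $F=\aa_1\cdots\aa_k$ corresponds to a walk that enters an $(m+2)$-gon, ``turns'' across it, moves to the next $(m+2)$-gon sharing that diagonal, and so on. I would first observe that such a walk can only be stopped when it reaches an edge of the polygon $P$ that is not a diagonal — i.e.\ a boundary segment — or when it closes up into a cycle. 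In the connected non-cyclic situation (which is the case of this lemma, as the statement concerns an algebra associated to a connected $(m+2)$-angulation without internal $(m+2)$-gons — cycles are handled separately) every forbidden thread therefore begins and ends ``at the boundary,'' and I would make this precise by associating to $F$ the boundary segment $\frakb$ lying between the terminal marked point of the last diagonal $\te$ crossed and the next marked point of $M_T$ one meets walking along $\partial P$ in the appropriate direction.

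Concretely, the first main step is to define the correspondence $F\mapsto \frakb_i$. Starting from a forbidden thread $F$, the arrows of $F$ sweep through a consecutive string of $(m+2)$-gons glued along diagonals; following the ``forbidden'' turns as far as possible and then exiting through a boundary edge singles out a distinguished boundary segment. I would need to check this is well defined (independent of which end of $F$ one starts from — actually each end of $F$ should be recorded, and I expect the two ends together cut out exactly one $\frakb_i\in\frakB$, matching the picture in which $\frakb_i$ runs between two points of $M_T$) and that the map lands in $\frakB$. The second step is to build the inverse: given $\frakb_i\in\frakB$ with endpoints $p,q\in M_T$, consider the unique $(m+2)$-gon having $\frakb_i$ (together with possibly several further boundary segments among the $w(\frakb_i)$ points of $M\setminus M_T$) as consecutive boundary edges; walking counter-clockwise through its remaining edges and continuing across shared diagonals produces a forbidden path, which must be a maximal forbidden thread because every further composition would require a turn that is not a relation (it would correspond to a boundary edge, not a diagonal). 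One then checks these two assignments are mutually inverse, which is essentially the same bookkeeping as in Lemma~\ref{lem a}.

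The third step is the length count $\ell(F)=m-w(\frakb_i)$. Here I would argue as follows: an $(m+2)$-gon has $m+2$ edges. If $\frakb_i$ is bounded by $p,q\in M_T$ and contains $w(\frakb_i)$ further marked points of $M\setminus M_T$, then the portion of the boundary from $p$ to $q$ is subdivided into $w(\frakb_i)+1$ boundary edges; these all belong to the ``ear'' $(m+2)$-gon cut off at $\frakb_i$, because the points of $M\setminus M_T$ are by definition not incident to any diagonal, so no diagonal separates them. Hence that $(m+2)$-gon has $w(\frakb_i)+1$ boundary edges and the remaining $(m+2)-(w(\frakb_i)+1) = m+1-w(\frakb_i)$ edges are diagonals in $T$, i.e.\ vertices of $Q_T$. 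Of these, the forbidden thread $F$ turns across the interior of this gon using the $m-w(\frakb_i)$ arrows joining consecutive diagonal-edges (there are one fewer arrows than edges among a consecutive run), and then — in the non-cyclic connected case — $F$ either exits immediately at the boundary on the far side or continues; the careful claim is that all the arrows of $F$ come from this single ear $(m+2)$-gon, so $\ell(F)=m-w(\frakb_i)$. I would verify the edge-case where $w(\frakb_i)=m$, giving $\ell(F)=0$, i.e.\ a trivial forbidden thread at a marked point incident to a unique diagonal, matching the last sentence of the proof of Lemma~\ref{lem a}.

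The main obstacle I anticipate is the global/local transition: showing that a maximal forbidden thread does not ``pass through'' several $(m+2)$-gons but is in fact confined to the single ear gon attached to $\frakb_i$ — equivalently, that once the walk enters a gon via a diagonal and must turn, the next relation forces it back out to the boundary of that same gon. This is where connectedness and the absence of internal $(m+2)$-gons (the hypothesis implicit in restricting to this case, with cyclic behaviour deferred) are used, and it requires unwinding Definition~\ref{def Qt} carefully rather than any hard new idea; the rest is bookkeeping parallel to Lemma~\ref{lem a}.
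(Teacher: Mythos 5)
Your overall strategy matches the paper's: identify forbidden threads with the non-internal $(m+2)$-gons, then with the segments of $\frakB$, and obtain the length formula by counting within a single gon (your edge count $m+2=(m+1-w)+(w+1)$ with one fewer arrow than $T$-edges is equivalent to the paper's angle count $m+2=\ell(F)+w(\frakb)+2$). But there is a genuine gap at the step you yourself flag as ``the main obstacle,'' and your working picture of a forbidden thread is actually wrong. You describe $F$ as a walk that ``turns'' across a gon and then ``moves to the next $(m+2)$-gon sharing that diagonal''; that is the behaviour of a \emph{permitted} thread (consecutive composable arrows not in $I_T$ come from different gons glued at a common marked point). For a forbidden thread the situation is the opposite, and the confinement you defer is immediate from Definition~\ref{def Qt}: each arrow of $Q_T$ is the angle between two consecutive $T$-edges of a unique $(m+2)$-gon, and $\aa\bb\in I_T$ precisely when $\aa$ and $\bb$ are neighbouring angles of the \emph{same} gon, so by induction every arrow of a forbidden path lies in one fixed gon. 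No connectedness and no global argument is needed here; a maximal such path simply sweeps all angles between consecutive $T$-edges of that gon. Since this confinement is what makes the whole bijection work, leaving it unproved --- and preceding it with a description that contradicts it --- is a real gap rather than a deferred routine check.

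Second, you misplace where connectedness enters. It is not needed for confinement; it is needed to guarantee that each non-internal $(m+2)$-gon meets \emph{exactly one} segment of $\frakB$, i.e.\ that its boundary edges form a single consecutive run. Your inverse construction speaks of ``the unique $(m+2)$-gon having $\frakb_i$ \dots\ as consecutive boundary edges,'' and your count assumes all $w(\frakb_i)+1$ boundary edges of the gon belong to one $\frakb_i$; both silently use this. If a gon carried two disjoint runs of boundary edges, it would contribute no arrows joining the corresponding groups of diagonals and the quiver would disconnect --- this is exactly what happens in the degenerate annulus example of Section~\ref{general} --- which is why connectedness appears in the hypothesis. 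Adding these two points, the one-line confinement argument and the ``exactly one segment of $\frakB$ per non-internal gon'' consequence of connectedness, would complete your proof along the paper's lines.
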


\begin{proof}
This proof is similar to the proof of Lemma~\ref{lem a}. By the definition of $I_T$,
the composition $\aa\bb\in I_T$ if and only if $\aa$ and $\bb$ are defined by
neighboring angles of the same $(m+2)$-gon. Hence, the forbidden threads can be
identified with the $(m+2)$-gons of $(P,M,T)$. Additionally, by assumption the
$(m+2)$-angulation is connected, hence each non-internal $(m+2)$-gon contains exactly one
segment from $\frakB$, giving us the identification with elements of $\frakB$. We do
not include the interior $(m+2)$-gons because these, by definition, give rise to an
oriented cycle of relations, hence there is no terminal arrow to define the thread.

Similarly, it is clear from the definition of $I_T$ that, given an $(m+2)$-gon $\triangle$ 
bounded by some $\frakb$, the composition of the arrows defined in $\triangle$ defines
a forbidden thread.

Note that this correspondence also holds for trivial forbidden threads, these threads
correspond to $(m+2)$-gons which contain exactly two points from $M_T$. Such a
$(m+2)$-gon contains a single edge of $T$, say $\tau_i$, which corresponds to a source,
a sink, or the intermediate vertex of a relation in $I_T$. In each of these cases
$e_i$ is a trivial forbidden thread.

Let $F$ be a forbidden thread, $\triangle$ the corresponding $(m+2)$-gon, and $\frakb$
the corresponding edge from $\frakB$. From the first paragraph, we immediately see
that $\ell(F)$ is the number of angles in $\triangle$ constructed from edges in $T$.
We wish to count these angles. There are $(m+2)$ total angles in $\triangle$ coming in
three types: $\ell(F)$ many angles completely constructed by $T$, $w(\frakb)$ many
completely constructed by $\frakb$, and the two angles where $T$ and $\frakb$ meet.
Hence, we have $m+2 = \ell(F)+w(\frakb) + 2$ and we immediately see $\ell(F) = m-
w(\frakb)$, as desired.
\end{proof}

Recall from Remark~\ref{rmk:bijection}, the steps \eqref{alg:HtoF} and \eqref{alg:FtoH} of
the AG algorithm give two different bijections $\varphi$ and $\psi$ between the set of
permitted threads $\calh$ and the set of forbidden threads which do not start and end in
the same vertex. Throughout the proofs of the following lemmas we will use $p\in M_T$ and 
$\frakb\in\frakB$ to both represent the elements of each respective set but also the 
corresponding permitted or forbidden thread in $(Q_T,I_T)$.

\begin{lemma}\label{lem 1}
Let $p\in M_T$ and let $p$ also denote the corresponding permitted thread, then the
forbidden thread $\varphi(p)$ is given by the edge $\frakb\in\frakB$ incident to and
following $p$ in the counter-clockwise direction.
\end{lemma}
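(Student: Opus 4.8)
The plan is to unwind the definitions of $\varphi$ and of the functions $\sigma,\e$ in terms of the geometry of the $(m+2)$-angulation, using the bijections already established in Lemmas~\ref{lem a} and~\ref{lem b}. Recall that $\varphi(p)$ is by definition the forbidden thread $F$ ending at the vertex $t(p)\in Q_0$ with $\e(p)=-\e(F)$. So I first need to identify $t(p)$ geometrically. The permitted thread $p$ is the sequence of arrows coming from the angles incident to the marked point $p$, read in the counter-clockwise order around $p$; its last arrow corresponds to the final angle at $p$, one side of which is the last diagonal $\te_x$ incident to $p$ in the counter-clockwise sweep. Thus $t(p)$ is the vertex of $Q_T$ corresponding to this last diagonal $\te_x$. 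On the other side of $\te_x$ from $p$ lies an $(m+2)$-gon $\triangle$; walking counter-clockwise around $\triangle$ starting from $\te_x$, one passes through the angles at $p$ that are \emph{not} incident to $p$ — precisely the angles making up the forbidden thread associated to $\triangle$ in Lemma~\ref{lem b} — and this $\triangle$ is bounded by exactly one boundary segment $\frakb\in\frakB$, which is the segment incident to $p$ and immediately following $p$ counter-clockwise.

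The next step is to check that this $\frakb$ is the right forbidden thread, i.e. that the thread $F$ corresponding to $\triangle$ actually ends at $t(p)=\te_x$ and satisfies the sign condition $\e(p)=-\e(F)$. For the endpoint: the last arrow of $F$ is the one given by the angle of $\triangle$ incident to the vertex where $\te_x$ meets the boundary segment $\frakb$ — this is exactly the angle at that corner of $\triangle$ built from $\te_x$ and $\frakb$ — so $F$ ends at the vertex $\te_x = t(p)$, as required. (One must also treat the degenerate case where $F$ is a trivial forbidden thread $e_{\te_x}$, which happens when $\triangle$ meets $M_T$ in just two points; here $\te_x$ is a source, sink, or the middle of a relation, and the trivial-thread conventions handle it.) For the sign condition, I would argue that the last arrow $\aa$ of $p$ and the last arrow $\bb$ of $F$ both end at $\te_x$, and they are distinct arrows ending at the same vertex — the arrow $\aa$ comes into $\te_x$ "from the $p$ side" while $\bb$ comes in "from the $\triangle$-side through $\frakb$" — so by defining property (2) of $\e$ we get $\e(\aa)=-\e(\bb)$, that is $\e(p)=-\e(F)$. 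In the trivial-thread case the same conclusion follows from the extension formulas for $\e$ on trivial threads together with property (2).

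The main obstacle I anticipate is bookkeeping of the several configurations at the vertex $t(p)=\te_x$: depending on whether $\te_x$ has one or two diagonals of $T$ and one or two $(m+2)$-gons adjacent to it on the relevant side, the arrows $\aa$ and $\bb$ may or may not literally be two distinct arrows of $Q_T$ with common target, and the boundary segment following $p$ may be reached through a chain of $(m+2)$-gons separated by interior $(m+2)$-gons that contribute oriented cycles rather than threads. I would handle this by first noting that interior $(m+2)$-gons have all their edges in $T$ and no incidence to the boundary, so the "next boundary segment counter-clockwise from $p$" is well defined regardless, and then doing a short case analysis (two or three cases) on the local picture at $t(p)$ to verify the target and sign conditions. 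The remaining pieces — that $\varphi$ is a bijection and that trivial threads are accounted for — are already guaranteed by Remark~\ref{rmk:bijection} and Lemmas~\ref{lem a} and~\ref{lem b}, so no separate verification is needed there.
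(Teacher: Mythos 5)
Your proposal is correct and follows essentially the same route as the paper's own proof: identify $t(p)$ with the last diagonal of the fan at $p$, locate the adjacent $(m+2)$-gon $\triangle$ containing the boundary segment $\frakb$ that follows $p$ counter-clockwise, and then verify the target and the sign condition $\e(p)=-\e(\varphi(p))$ by applying the defining property of $\e$ to the last arrow of $p$ and the last arrow of the forbidden thread of $\triangle$, with the same case split on whether $\triangle$ meets $M_T$ in exactly two points (trivial forbidden thread, including the sink/source/relation sub-cases where two forbidden threads end at $t(p)$ and the sign rules out the wrong one) versus more than two points. The paper's proof is just this argument written out with explicit figures for each local configuration.
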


\begin{proof}
The sequence of edges incident to $p$ can end in two ways: (1) bounding an
$(m+2)$-gon $\triangle$ incident to exactly two points of $M_T$ or (2) bounding $\triangle$
which is incident to more than two points of of $M_T$. In both cases, $\triangle$ contains a
boundary segment from $\frakB$, let $\frakb$ be this segment. In the first case, the
boundary segment $\frakb$ corresponds to a trivial forbidden thread, we have one of the
following figures:
\[
\includegraphics{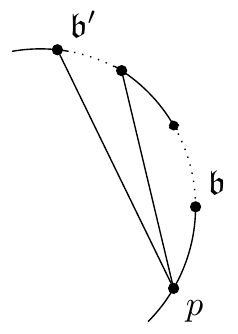}
\qquad  \raisebox{1.2cm}{or}\qquad
\includegraphics{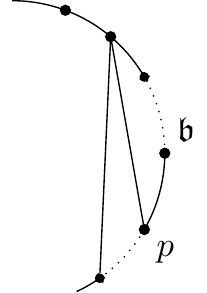}
\qquad \raisebox{1.2cm}{or}\qquad
\includegraphics{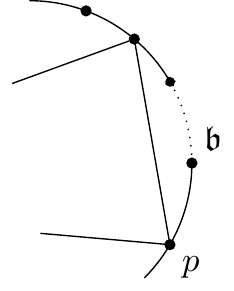}
\]
corresponding to a sink, source or neither in $Q_T$. In the first figure there are
two forbidden threads $\frakb$ and $\frakb'$ ending at $t(p)$. By the definition of
$\e$ we must have $\e(\frakb') = \e(p)$ because $\frakb$ corresponds to the final
arrow of $p$. Similarly, $\e(\frakb) = -\e(p)$, hence $\varphi(p)=\frakb$. In the
last two cases the only possible forbidden thread ending at $t(p)$ is $\frakb$. It is
a simple check that $\frakb$ must satisfy the compatibility condition on $\e$ used in
step 1(b) defining the AG-invariant. It follows that $\varphi(p)=\frakb$.

If the ending $(m+2)$-gon $\triangle$ is of type (2), that is, if $\triangle$ contains more
that two points of $M_T$, then there is a unique choice for the forbidden thread 
$\varphi(p)$.  We have the following figure
\[
%
%
%
%
%
%
\includegraphics{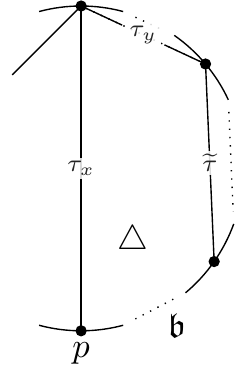}
\qquad \raisebox{1.75cm}{or}\qquad
%
%
%
%
%
%
\includegraphics{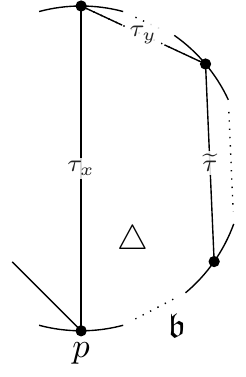}
\]
where we $\tilde\te$ the other required edges of $\triangle$. We allow that $\tilde\te$ does 
not exist, so $w(\frakb) = m-1$.  If $p$ is a trivial thread, then the only forbidden thread
ending at $\tau_x$ is $\frakb$ and it follows immediately that $\e(p)=-\e(\frakb)$ as required.
On the other hand if $p$ is not trivial, then the final arrow $\aa$ of the thread $p$ is also a 
forbidden thread (of length 1). Let $\bb$ denote the final arrow of $\frakb$, which is formed by 
the angle between $\te_y$ and $\te_x$. From the definition of $\e$, we must have
$\e(p) = \e(\aa)=-\e(\bb) = -\e(\bb)$, as desired. Hence $\varphi(p)=\frakb$.
\end{proof}

\begin{lemma}\label{lem 2}
Let $\frakb\in\frakB$ and let $\frakb$ also denote the corresponding forbidden thread,
then the permitted thread $\psi(\frakb)$ is the marked point $p\in M_T$ incident to and
following $\frakb$ in the counter-clockwise direction.
\end{lemma}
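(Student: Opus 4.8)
The plan is to mirror the structure of the proof of Lemma~\ref{lem 1}, but now tracing the bijection $\psi$ that takes a forbidden thread $F_i$ to the permitted thread $H_{i+1}$ via step~\eqref{alg:FtoH}: we must start a permitted thread at $s(F_i)$ with $\sigma(F_i) = -\sigma(H_{i+1})$. So first I would identify, for a given $\frakb\in\frakB$, the $(m+2)$-gon $\triangle$ bounded by $\frakb$ (this is the correspondence from Lemma~\ref{lem b}) and then locate the marked point $p\in M_T$ that sits at the start of the forbidden thread $\frakb$ — concretely, $p$ is the endpoint of $\frakb$ reached by walking counter-clockwise along the boundary, i.e.\ the corner of $\triangle$ where the last boundary angle meets the first $T$-angle. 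The claim is precisely that the permitted thread $\psi(\frakb)$ is the thread associated (via Lemma~\ref{lem a}) to this $p$.

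The key steps, in order: (1) Recall that $\frakb$ as a forbidden thread is the composition of the $T$-angles of $\triangle$, so $s(\frakb)$ is the vertex of $Q_T$ corresponding to the edge $\te\in T$ that bounds the first such angle; this $\te$ is incident to $p$. (2) Recall from Lemma~\ref{lem a} that the permitted thread at $p$ is the sequence of arrows coming from the angles incident to $p$, read in counter-clockwise order, so it indeed starts at the vertex $\te$ — hence $s(\frakb) = s(p)$ as required, and $p$ is a genuine candidate for $\psi(\frakb)$. (3) Split into the two cases as in Lemma~\ref{lem 1}: either $p$ is incident to a unique edge of $T$ (so the permitted thread at $p$ is trivial, $p = h_{\te}$) or it is incident to at least two edges of $T$ (so the thread is non-trivial and its first arrow $\aa$ is the $Q_T$-arrow from the angle between the two $T$-edges at $p$ on the side of $\triangle$). (4) In each case verify the sign condition $\sigma(\frakb) = -\sigma(\psi(\frakb))$ using the defining relations of $\sigma$ and $\e$: when $p$ is trivial this is forced by the extension rules for $\sigma(h_{\te})$; when $p$ is non-trivial, note that the first arrow $\bb$ of $\frakb$ and the arrow $\aa$ sit at consecutive corners of $\triangle$ with $\bb$ immediately following $\aa$ in the counter-clockwise walk, and since $\aa\bb \notin I_T$ is impossible — wait, rather $\aa$ is the first arrow of the permitted thread and $\bb$ the first arrow of the forbidden thread, both emanating from configurations at $\te = s(\frakb)$, so condition~(3) on $\sigma,\e$ (or condition~(1) when they share a source) pins down $\sigma(\frakb) = \sigma(\bb) = -\sigma(\aa) = -\sigma(p)$. (5) Handle the degenerate sub-case where the auxiliary edges $\tilde\te$ of $\triangle$ may be absent exactly as in Lemma~\ref{lem 1}, and also note, dually to the last remark in Lemma~\ref{lem b}, that trivial forbidden threads correspond to the $(m+2)$-gons with exactly two $M_T$-corners and the argument goes through verbatim with $\frakb$ replaced by the trivial thread $p_{\te}$.

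I expect the main obstacle to be the bookkeeping of signs in step~(4): one must be careful about whether the relevant arrows share a source or a target, which of the three defining conditions on $\sigma,\e$ applies, and whether the thread being trivial changes which extension rule for $\sigma(h_{\te})$ or $\sigma(p_{\te})$ is invoked. The geometric content — that $\psi(\frakb)$ is the next marked point counter-clockwise — is essentially dictated by the fact that $\psi$ starts a permitted thread at $s(\frakb)$ and permitted threads are read counter-clockwise around their base marked point (Lemma~\ref{lem a}); so the real work is only in confirming that the sign normalization picks out this thread rather than, in the source case, the other permitted thread starting at $s(\frakb)$. As in Lemma~\ref{lem 1}, this should reduce to ``a simple check'' once the correct figure (boundary segment $\frakb$ meeting the $T$-edges of $\triangle$ at $p$, distinguishing the sink/source/neither cases) is drawn.
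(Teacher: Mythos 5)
Your proposal is correct and follows essentially the same route as the paper: identify the edge $\te = s(\frakb)$ incident to $p$ and bounding the $(m+2)$-gon containing $\frakb$, observe that the permitted thread at $p$ also starts at $\te$, and then verify $\sigma(\frakb) = -\sigma(p)$ by a case analysis on whether the threads involved are trivial, using condition (1) on $\sigma$ (two distinct arrows sharing the source $\te$) or the extension rules for trivial threads. The paper organizes the cases by $w(\frakb)=m$ versus $w(\frakb)<m$ rather than by the triviality of $p$, but the content is the same.
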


\begin{proof}
Let $p$ be as in the statement, further let $\te$ be the edge incident to $p$ and bounding
the $(m+2)$-gon containing $\frakb$.  

As in the previous lemma, we consider two cases. First assume that $w(\frakb) = m$,
so that $\frakb$ is a trivial forbidden thread, call the corresponding vertex
corresponds to $\te$. This has two sub-cases. If $\te$ is not the source of any
arrows, then $p$ is the trivial permitted thread at $\te$ and the definition of
$\sigma$ immediately implies that $\sigma(\frakb) = -\sigma(p)$, hence
$\psi(\frakb)=p$. The second sub-case, if $x$ is the source of an arrow, this arrow
must be unique and formed by an angle incident to $p$. Call this arrow $\aa$, we must
have $\sigma(p) = \sigma(\aa) = -\sigma(\frakb)$.

Now assume that $w(\frakb)<m$, so $\frakb$ is a non-trivial forbidden thread. Let $\bb$ denote
the initial arrow of this thread, note that $s(\bb) = \te$.  In this case, $p$
may represent either a trivial or a non-trivial permitted thread. In both cases, we have
$\sigma(\frakb)= \sigma(\bb) = -\sigma(p)$, hence $\psi(\frakb)=p$. 
\qedhere

\begin{figure}
\centering
\includegraphics{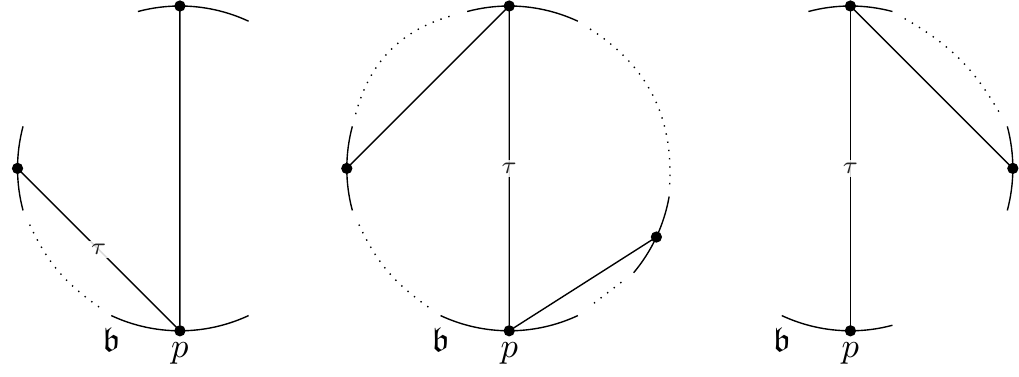}
\caption{Possible configurations for $\te$ in Lemma~\ref{lem 2}.}\
\label{fig lem 2}
\end{figure}

\end{proof}

\begin{thm}\label{main thm}
Let $\alg$ be the algebra associated to the connected $(m+2)$-angulation $(P,M,T)$. The
AG-invariant of $\alg$ is $(a,b)^* + t(0,m+2)^*$ where $t$ is number of internal
$(m+2)$-gons in $(P,M,T)$ and 
	\[ a = \# M_T, \quad \text{and}\quad b= \sum_{\frakB}(m-w(\frakb_i)).\]
\end{thm}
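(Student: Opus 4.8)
The plan is to run the AG-algorithm explicitly, using Lemmas~\ref{lem a}, \ref{lem b}, \ref{lem 1}, and \ref{lem 2} as the combinatorial engine. First I would dispose of the oriented cycles of relations: by the proof of Lemma~\ref{lem b}, these correspond exactly to the internal $(m+2)$-gons, each of which has length $m+2$, so step (3) of the algorithm contributes precisely $t(0,m+2)^*$. It then remains to show that step (1)--(2) of the algorithm produces the single pair $(a,b)$ with $a=\#M_T$ and $b=\sum_{\frakB}(m-w(\frakb_i))$, i.e.\ that the algorithm runs through \emph{all} permitted threads in one single orbit.

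The key step is to identify the maps $\varphi$ and $\psi$ geometrically. Lemma~\ref{lem 1} says $\varphi$ sends a marked point $p\in M_T$ (viewed as a permitted thread) to the boundary segment $\frakb\in\frakB$ incident to $p$ and immediately following it counter-clockwise; Lemma~\ref{lem 2} says $\psi$ sends a boundary segment $\frakb$ to the marked point $p\in M_T$ incident to $\frakb$ and immediately following it counter-clockwise. Composing, $\psi\circ\varphi$ sends a marked point $p\in M_T$ to the \emph{next} marked point of $M_T$ encountered as one walks counter-clockwise around the boundary (skipping over the points of $M\setminus M_T$). Here I would invoke connectedness of $(P,M,T)$ to conclude that this "next element of $M_T$ going counter-clockwise" permutation of $M_T$ is a single $\#M_T$-cycle: connectedness forces all arcs and all $(m+2)$-gons to be linked, so one cannot partition the cyclically ordered set $M_T$ into two boundary-disjoint sub-cycles. (I should double-check the degenerate cases where $M_T$ has very few elements, and the case where some boundary segment is a trivial thread, but Lemmas~\ref{lem 1} and \ref{lem 2} already handle the trivial-thread subtleties.)

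Given that $\psi\circ\varphi$ is a single $a$-cycle on $M_T$ with $a=\#M_T$, the algorithm starting at any permitted thread $H_0$ returns to $H_0$ after exactly $n=a$ steps, having visited every permitted thread exactly once, so the first coordinate of the unique pair produced is $a$. For the second coordinate, $m=\sum_{i=1}^{n}\ell(F_i)$ where the $F_i$ range, bijectively by Lemma~\ref{lem b}, over all of $\frakB$; and Lemma~\ref{lem b} gives $\ell(F_i)=m-w(\frakb_i)$ for the segment $\frakb_i$ corresponding to $F_i$. Hence the sum is $\sum_{\frakB}(m-w(\frakb_i))=b$, and step (1)--(2) contributes exactly $(a,b)^*$. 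Adding the cycle contribution gives $\AG(\alg)=(a,b)^*+t(0,m+2)^*$.

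I expect the main obstacle to be the connectedness argument: making rigorous the claim that $\psi\circ\varphi$ is a \emph{single} cycle on $M_T$ rather than a product of several cycles. The geometric intuition is clear—a break into several cycles would disconnect the boundary into arcs that are not bridged by arcs of $T$, contradicting connectedness of $(Q_T,I_T)$—but turning "the $(m+2)$-angulation is connected" into "walking $\varphi$ then $\psi$ repeatedly visits every point of $M_T$" requires care, perhaps by an induction on $\#T$ or by directly tracking how consecutive $(m+2)$-gons share arcs. Everything else is bookkeeping on top of the four preceding lemmas.
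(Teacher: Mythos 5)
Your proposal is correct and is exactly the argument the paper intends: the paper's proof is the one-line ``follows immediately from Lemmas~\ref{lem a}, \ref{lem b}, \ref{lem 1}, \ref{lem 2}'', and you have simply written out the bookkeeping. The ``main obstacle'' you flag is not actually one here: since $P$ is a disc there is a single boundary circle, $M_T$ carries a cyclic order on it, and Lemmas~\ref{lem 1} and \ref{lem 2} show $\psi\circ\varphi$ is precisely ``advance to the next element of $M_T$ counter-clockwise,'' which is automatically a single $\#M_T$-cycle (connectedness is used inside Lemma~\ref{lem b}, not for the single-orbit claim).
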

\begin{proof}
This follows immediately from Lemma~\ref{lem a},~\ref{lem b}, \ref{lem 1}, and \ref{lem 2}
and the definition of the AG algorithm.
\end{proof}

\section{Surface algebras as $m$-cluster-tilted}
\label{TitledAsMClusterTilted}

For brevity we omit the definition of surface algebras given in \cite{DS}, hence, we will
not discuss the concept of admissible cuts, instead, we define surface algebras as
algebras arising from particular partial triangulations of surfaces. Further, we will
focus on the case when the surface is a disc. The resulting algebra is iterated tilted of
type $\AA_n$ with global dimension at most 2. The definition we give could easily be extended 
to other type, but it will not be needed. 

\begin{dfn}
Let $(P,M)$ be a disc with marked points in the boundary. Fix a partial
triangulation $T$ such that the non-triangular components are squares containing
exactly one edge in the boundary. As for $(m+2)$-angulations, we define the bound
quiver $(Q_T,I_T)$ where $(Q_T)_0$ is indexed by the edges in $T$ and there is an
arrow $\aa\colon i\to j$ if $\te_i$ and $\te_j$ form an angle in a triangle (or square) in
$T$ and $j$ follows $i$ in the counter-clockwise direction. As before, we say the
arrow $\aa$ lives in the triangle (resp. square) that $\te_i$ and $\te_j$ bound. We
can then define the ideal $I_T$ by setting $\aa\bb\in I_T$ if $\aa$ and $\bb$ live in
the same square.
\end{dfn}

\begin{ex}\label{ex partial}
The quiver associated to the partial triangulation given in Figure~\ref{fig partial} is\
\[
1\xlto{\aa_1} 2\xto{\aa_2} 3\xto{\aa_3}4
\]
with $I_T = \langle \aa_2\aa_3\rangle$. 

\begin{figure}
\centering
\begin{tikzpicture}[scale=.33]
\draw (0,0) circle[radius=5cm];
\foreach \a in {0,45,...,315}
 \node[solid,name=n\a] at (\a:5cm) {};

\draw
 (n0) -- (n90) node[int] {$\te_1$}
 (n0) -- (n135) node[int] {$\te_2$}
 (n0) -- (n270) node[int] {$\te_3$}
 (n270) -- (n180) node[int] {$\te_4$};
\end{tikzpicture}
\caption{A partial triangulation of the disc.}
\label{fig partial}
\end{figure}
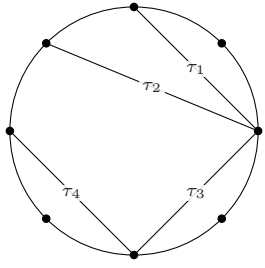
\end{ex}

Given an iterated tilted algebra $\alg$ of type $\AA$ defined via a partial 
triangulation, we will show that $\alg$ is $m$-cluster-tilted for any $m>1$
by realizing it as an $(m+2)$-angulation of the disc. Further, the calculation
of the AG-invariant independent of the choice of $m$. 

Let $(P,M,T)$ be the partial triangulation of $\alg$ and define the sets $M_T$ and
$\frakB$ as in section~\ref{calculation}. For $m>1$, we construct a $(m+2)$-angulation
from $(P,M,T)$ as follows. To each edge $\frakb\in\frakB$ we add the following number
of marked points
\begin{equation}\label{eq new points}\begin{cases}
m-2 & \text{if } \frakb \text{ bounds a square},\\
m-1 & \text{otherwise}.\\
\end{cases}\end{equation}
let $M'$ be the new marked points, we define an $(m+2)$-angulation $(P,M\cup M',T)$ 
where $(P,M,T)$ is the original partial triangulation.  Because we are not creating
any new edges or angles, the quiver of $(P,M\cup M',T)$ is exactly the quiver $Q_T$
associated to $(P,M,T)$. 
Notice that the total number of points for each component polygon of $T$ will be
$4 + m-2 = m+2$ or  $3 + m - 1 = m+2$, hence this process does indeed produce an
$(m+2)$-angulation.  Further, by the construction of $(Q_T,I_T)$,
it immediately follows from the definition given in \cite{Mur-PRE} that $\alg$ is 
$m$-cluster-tilted of type $\AA_n$. 

We note that recent work in \cite{FPT-PRE} has shown how to construct $m$-cluster-tilted
algebras from iterated tilted algebras with global dimension at most $m+1$ via relation
extensions, extending work that was done for $m=1$ in \cite{ABS}. The realization we have
constructed above will only create $m$-cluster-tilted algebras with global dimension at
most 2. It should also be remarked the that above construction verifies a special case of
Corollary~6.6(b) in \cite{BG-PRE}.  We formalize the above discussion in the following
theorem.

\begin{thm}
The iterated tilted algebras of type $\AA$ with global dimension at most 2 are
$m$-cluster-tilted algebras for $m>1$.
\end{thm}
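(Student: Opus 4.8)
The plan is to combine the classification of disc surface algebras from \cite{DS} with the point-adding construction described above. First I would invoke the classification of \cite{DS} in the disc case: an algebra is iterated tilted of type $\AA$ with global dimension at most $2$ if and only if it is isomorphic to $kQ_T/I_T$ for some partial triangulation $(P,M,T)$ of a disc whose non-triangular pieces are squares containing exactly one boundary edge. So, given such an algebra $\alg$ (of type $\AA_n$), I fix a partial triangulation $(P,M,T)$ together with an isomorphism $\alg\cong kQ_T/I_T$; the disconnected case reduces to this one by taking a disjoint union of discs.

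For a fixed $m>1$, I would then apply the construction preceding the theorem, adding to each boundary segment $\frakb\in\frakB$ the number of marked points prescribed by \eqref{eq new points} to obtain $M'$ and the configuration $(P,M\cup M',T)$. The point to verify is that this is genuinely an $(m+2)$-angulation. Since no chords are added, the pieces cut out by $T$ remain the old triangles and squares; each old triangle acquires $m-1$ new vertices for a total of $m+2$, and each old square acquires $m-2$ new vertices, again for a total of $m+2$, so every piece is an $(m+2)$-gon. It follows that no diagonal of $T$ can be subdivided further, hence $T$ is maximal, and each $\te\in T$ is $m$-allowable because the two sub-polygons it determines are $(m+2)$-angulated by the chords of $T$ themselves; the congruence $\#(M\cup M')\equiv 2\pmod m$ then holds automatically. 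The degenerate case $m=2$, where squares receive no new points but a square is already a $4$-gon, causes no difficulty.

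It remains to match bound quivers. Passing from the partial triangulation to $(P,M\cup M',T)$ creates no new edges of $T$ and no new angles between edges of $T$, so by Definition~\ref{def Qt} the quiver of $(P,M\cup M',T)$ equals $Q_T$; and $\aa\bb\in I_T$ holds exactly when $\aa,\bb$ are formed by neighbouring angles of a common $(m+2)$-gon, which under the identification of pieces is precisely the original condition that $\aa$ and $\bb$ live in the same square. Hence the bound quiver of $(P,M\cup M',T)$ is the original $(Q_T,I_T)$, and by the definition of $m$-cluster-tilted algebras of type $\AA_n$ in \cite{Mur-PRE} the algebra $kQ_T/I_T\cong\alg$ is the $m$-cluster-tilted algebra attached to $(P,M\cup M',T)$. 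This gives the claim for every $m>1$.

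The main obstacle is the first step: one needs the full force of \cite{DS}, namely that \emph{every} iterated tilted algebra of type $\AA$ with global dimension at most $2$ --- not just the ones one writes down directly --- is realized by a partial triangulation of the prescribed shape. Once that surjectivity is in hand, the remainder is the routine bookkeeping sketched above: checking the $(m+2)$-angulation axioms and verifying that the two bound quivers coincide, with the only genuine constraint being that $m\ge 2$ is exactly what makes the number of added points in \eqref{eq new points} nonnegative.
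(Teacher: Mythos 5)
Your argument is essentially the paper's own: both start from the realization of these algebras via partial triangulations of the disc (resting on the classification in \cite{DS}), add the marked points prescribed by \eqref{eq new points} to each boundary segment, and observe that no new edges or angles are created so the bound quiver $(Q_T,I_T)$ is unchanged, whence the algebra is $m$-cluster-tilted by the definition in \cite{Mur-PRE}. The additional checks you supply (maximality and $m$-allowability of the chords, the congruence mod $m$) and your explicit flagging of the dependence on the surjectivity statement from \cite{DS} only make precise what the paper leaves implicit.
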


\begin{ex} In Example~\ref{ex partial} we
associated the following quiver to the partial triangulation given in Figure~\ref{fig partial}
\[
1\xlto{\aa_1} 2\xto{\aa_2} 3\xto{\aa_3}4
\]
with $I_T = \langle \aa_2\aa_3\rangle$.  This quiver also corresponds to the $3+2$-angulation 
given in Figure~\ref{fig partial extended} which is constructed using equation~\eqref{eq new points}.

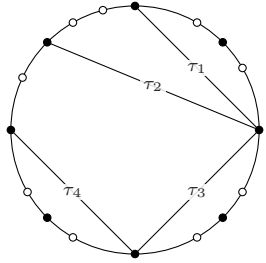
\begin{figure}
\centering
\begin{tikzpicture}[scale=.33]
\draw (0,0) circle[radius=5cm];
\foreach \a in {0,45,...,315}
 \node[solid,name=n\a] at (\a:5cm) {};

\draw
 (n0) -- (n90) node[int] {$\te_1$}
 (n0) -- (n135) node[int] {$\te_2$}
 (n0) -- (n270) node[int] {$\te_3$}
 (n270) -- (n180) node[int] {$\te_4$};
 
\foreach \a in {30,60,105,120,155,210,240,300,330}
 \node[solid,name=n\a,fill=white] at (\a:5cm) {};
 
\end{tikzpicture}
\caption{The $m=3$ version of Figure~\ref{fig partial}.  The new 
marked points determined by equation~\eqref{eq new points} are white.}
\label{fig partial extended}
\end{figure}
\end{ex}

\section{Other surfaces}\label{general}

We begin with an example to demonstrate that the above work can not directly generalize to 
other surfaces.  In the previous sections we have restricted our work to $P$ a disc, we now
consider the annulus.  When $m=1$,  triangulations of the annulus correspond to cluster-tilted
algebras of affine Dynkin type $\tilde\AA$, hence this is a natural next step after 
type $\AA$.  Consider the following $4$-angulation of the 
annulus:
\[\begin{tikzpicture}[scale=.33]
\draw (0,0) circle[radius=5cm];
\filldraw[fill=gray!40] (0,0) circle[radius=1cm];

\foreach \a in {0,90,180,270}
	\node[solid,name=o\a] at (\a:5cm) {};
\foreach \a in {90,270}
	\node[solid,name=i\a] at (\a:1cm) {};
	
\draw
 (o0) ..controls +(115:2cm) and +(0:2cm) .. (90:3cm)
      ..controls +(180:1.5cm) and +(90:1.5cm) ..(180:3cm) node[int] {$\te_2$}
      ..controls +(270:2cm) and +(125:2cm) .. (o270) 
 (o270) ..controls +(45:2cm) and +(270:1cm) .. (0:2cm) node[int] {$\te_3$}
 	   ..controls +(90:1cm) and +(25:1cm) .. (i90)
 (o0) ..controls +(135:2cm) and +(0:1cm) .. (90:2.25cm) node[int,pos=.9] {$\te_1$}
      ..controls +(180:1.25cm) and +(90:1.25cm) .. (180:2.25cm)
      ..controls +(270:1.2cm) and +(200:1cm) .. (i270);
\node[left] at (180:.6cm) {$\frakb_1$};
\node[right] at (-30:5cm) {$\frakb_2$};
\end{tikzpicture}\]
We hope that this would correspond to 2-cluster tilted algebra of type
$\tilde\AA$. If we use same rule as given in Defintion~\ref{def Qt}, then the
corresponding quiver with relations ($Q_T,I_T)$ is
\[ 1 \xrightarrow{\ \ \aa\ \ } 2 \xrightarrow{\ \ \bb\ \ } 3\]
with $I_T = \langle \aa\bb\rangle$. This is an iterated tilted algebra of type $\AA_3$.
Further, we can realize this quiver coming from the following $(2+2)$-angulation 
of the disc
\[\begin{tikzpicture}[scale=.33]
\draw (0,0) circle[radius=5cm];
\foreach \a in {45,135,225,315}
	\node[solid,name=n\a] at (\a:5cm) {};
\foreach \a in {20,-20,160,200,250,290}
	\node[solid] at (\a:5cm) {};
\draw (n45) -- (n315) -- (n225) -- (n135);
\end{tikzpicture}\]
Applying Theorem~\ref{main thm}, tells us that the AG-invariant of $(Q_T,I_T)$ is
$(4,2)^*$.  However, in the spirit of extending Theorem~4.6 from \cite{DS}, in the 
annulus we should apply the main theorem to both boundary components to produce 
$(2,2)^*+(2,4)^*$, clearly the incorrect function.

Let $(P,M,T)$ be an $(m+2)$-angulation of an arbitrary surface $P$ with $M_T$ and
$\frakB$  the sets  given in Definition~\ref{def ag pieces}. The primary issue with
the above example is that the $4$-gon bounded by $\te_1$ and $\te_3$ (but not
$\te_2$) contains more than one boundary segment from $\frakB$. This
$4$-gon encodes information about a sink and a source in the corresponding quiver
which are are forbidden threads. In Lemmas~\ref{lem a} and \ref{lem b} we needed a
clear bijection between the boundary components $\frakB$ and forbidden threads, this
bijection and subsequently the lemmas clearly fails in this case. Even though there
are exactly two boundary components and two threads, it is not clear which
boundary component should correspond to which thread. It is not hard to
see that, for $(m+2)$-angulations of surfaces with an arbitrary number of boundary
components, these lemmas will extend to those $(m+2)$-angulations such that each
$(m+2)$-gon contains at most one element from $\frakB$. In fact, it is also seen that
all of Section~\ref{calculation} will extend to these $(m+2)$-angulations. To make this
concrete, we introduce the following definitions.

\begin{dfn}\label{def B segs}
Let $P$ be a surface with a non-empty boundary with any number of boundary components,
$M$ a set of points in the boundary with at least one point in each component, and $T$
a collection of arcs contained in the interior of $P$ with endpoints in $M$. We say 
that $(P,M,T)$ is a  $(m+2)$-angulation if it subdivides the interior of $P$ into 
$(m+2)$-gons.  As in Section~\ref{calculation}, we set 
\[M_T := \{p\in M : p \text{ is incident to } T\},\]
and 
 \[\frakB=\{\frakb_1,\dots,\frakb_r\}.\] 
the pieces of the boundary such that the endpoints of $\frakb_i$ are 
in $M_T$ and each $\frakb_i$ does not contain any other points of $M_T$.
To distinguish elements in a particular boundary component, $M_T^{i}$ is the set of
marked points in the $i$th boundary component. Similarly, let $\frakB_i$ be the 
elements of $\frakB$ from the $i$th boundary component. The weight of a 
boundary segment, $w(\frakb)$, is defined as before 
(see Definition~\ref{def ag pieces}).
\end{dfn}

\begin{dfn}
Let $(P,M,T)$ be an $(m+2)$-angulation of a surface $P$ (which may have more than one
boundary component). We say $(P,M,T)$ is a \df{non-degenerate} $(m+2)$-angulation if
each $(m+2)$-gon in $(P,M,T)$ contains at most one element of $\frakB$. All other 
$(m+2)$-angulations are called \df{degenerate}.
\end{dfn}

Note that the non-degnerate $(m+2)$-angulations inherently give rise to connected
quivers. The definition of degenerate includes the $(m+2)$-angulations which are 
not connected. 

With these definitions, Theorem~\ref{main thm} can immediately be extended as follows. 
\begin{thm}\label{main thm 2}
If $(P,M,T)$ is a non-degenerate $(m+2)$-angulation and $\alg$ the corresponding 
algebra, then the AG-invariant of $\alg$ is given by 
\[t(0,m+2)^* + \sum_{i>0}(a_i,b_i)^* \] 
where $t$ is number of internal $(m+2)$-gons in $(P,M,T)$, $i$ indexes the boundary components,
and 
\[ a_i = \# M_T^i, \quad \text{and}\quad b_i= \sum_{\frakb_j\in\frakB_i}(m-w(\frakb_j)).\]
\qed
\end{thm}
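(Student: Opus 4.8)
The plan is to reduce Theorem~\ref{main thm 2} to the already-established connected case, Theorem~\ref{main thm}, by decomposing a non-degenerate $(m+2)$-angulation into its connected components and then showing that the three pieces of data in the AG-invariant — the permitted threads, the forbidden threads (including trivial ones), and the oriented cycles of relations — are additive over this decomposition. First I would observe that, as noted immediately before the statement, a non-degenerate $(m+2)$-angulation need not be connected as a surface but each of its connected components is a connected $(m+2)$-angulation in the sense of Section~\ref{calculation}; correspondingly the algebra $\alg$ decomposes as a direct product $\alg = \prod_j \alg_j$ over the connected components, and $\AG(\alg) = \sum_j \AG(\alg_j)$ since the AG-algorithm runs independently on each connected component of the quiver (permitted threads, forbidden threads, and relation cycles all stay within one component). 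So it suffices to treat one connected component.

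For a single connected component $(P',M',T')$, the key point is that non-degeneracy still allows a boundary component to meet several $(m+2)$-gons, but guarantees each $(m+2)$-gon carries \emph{at most} one segment of $\frakB$; what can fail relative to Section~\ref{calculation} is only that a boundary component can now be cut into several segments of $\frakB$. I would re-examine Lemmas~\ref{lem a}, \ref{lem b}, \ref{lem 1}, \ref{lem 2} and check that each proof used connectedness only through the statement ``each non-internal $(m+2)$-gon contains exactly one segment from $\frakB$.'' Under non-degeneracy this becomes ``at most one,'' and an $(m+2)$-gon with zero segments from $\frakB$ is exactly an internal one (all its non-diagonal edges must themselves be subdivided, so it has no boundary edge at all) — hence ``at most one'' together with ``non-internal'' still gives ``exactly one.'' Thus Lemma~\ref{lem b}'s bijection between non-internal $(m+2)$-gons and $\frakB$ goes through verbatim, as do the length formula $\ell(F) = m - w(\frakb_i)$ and Lemmas~\ref{lem a}, \ref{lem 1}, \ref{lem 2}. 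Running the AG-algorithm then shows, exactly as in Theorem~\ref{main thm}, that there is a single non-trivial pair whose first coordinate is $\#M_T' = \sum_i \#M_T^i$ and whose second coordinate is $\sum_{\frakB'}(m-w(\frakb)) = \sum_i \sum_{\frakb_j \in \frakB_i}(m-w(\frakb_j))$, plus one copy of $(0,m+2)$ for each internal $(m+2)$-gon of this component.

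Finally I would reassemble: summing over connected components $j$, the pairs $(0,m+2)$ accumulate to $t(0,m+2)^*$ with $t$ the total number of internal $(m+2)$-gons, and the non-trivial pairs give exactly $\sum_{i>0}(a_i,b_i)^*$ where $i$ ranges over all boundary components of $P$ (each boundary component lies in a unique connected component, and $a_i,b_i$ as defined in Definition~\ref{def B segs} are computed entirely from data on that boundary component). One subtlety worth checking carefully is the indexing: in the single-component argument the non-trivial part of $\AG$ is one pair $(\sum_i a_i, \sum_i b_i)$ summed over the boundary components \emph{of that component}, whereas the theorem writes $\sum_{i>0}(a_i,b_i)^*$ as a sum of separate delta functions; these agree only if each connected component of $P$ has a single boundary component. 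I expect this to be the main obstacle, and the resolution is that non-degeneracy forces exactly this: if a connected component had two boundary components, every $(m+2)$-gon meeting the boundary meets only one $\frakB$-segment, so the dual graph argument in the AG-algorithm (walking forward on permitted threads, backward on forbidden threads) would have to traverse between the two boundary components through internal $(m+2)$-gons, which is impossible since internal gons contribute only closed relation-cycles and no thread — so a connected non-degenerate $(m+2)$-angulation has exactly one boundary component, and the two formulas coincide. Making that reduction precise, rather than any computation, is where the real content lies.
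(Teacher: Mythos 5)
There is a genuine gap, and it is in the last step of your argument. You claim that running the AG-algorithm on a connected non-degenerate $(m+2)$-angulation produces a \emph{single} non-trivial pair $(\sum_i a_i,\sum_i b_i)$, and you then try to reconcile this with the statement by asserting that a connected non-degenerate $(m+2)$-angulation must have exactly one boundary component. That assertion is false: a triangulation of the annulus in which every triangle has at most one boundary edge (for instance, all arcs bridging the two boundary circles, with enough marked points on each circle) is connected and non-degenerate, has two boundary components, and yields a cluster-tilted algebra of type $\tilde\AA$ whose AG-invariant consists of two distinct pairs --- which is precisely the phenomenon the theorem is designed to capture. Your ``dual graph'' argument for the false claim does not hold up; nothing in the AG-algorithm ever needs to travel between boundary components, and that is exactly the point.

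The correct mechanism, which is what the paper relies on (its proof is essentially the remark that Lemmas~\ref{lem a}, \ref{lem b}, \ref{lem 1}, \ref{lem 2} extend verbatim under non-degeneracy), is that $\varphi$ sends the permitted thread at $p\in M_T^i$ to the segment of $\frakB_i$ following $p$ counter-clockwise, and $\psi$ sends a segment of $\frakB_i$ to the marked point of $M_T^i$ following it counter-clockwise. Hence one cycle of the algorithm never leaves the boundary component on which it starts: it traverses all of $M_T^i$ and all of $\frakB_i$ exactly once and closes up, producing the single pair $(a_i,b_i)=(\#M_T^i,\sum_{\frakb_j\in\frakB_i}(m-w(\frakb_j)))$ for that component. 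Summing over boundary components gives $\sum_{i>0}(a_i,b_i)^*$ directly, with no aggregation into one pair and no need for your topological claim. Your verification that non-degeneracy restores the ``each non-internal $(m+2)$-gon contains exactly one element of $\frakB$'' hypothesis of Lemma~\ref{lem b} is fine and is indeed the substantive point; the error is only in how you assemble the cycles of the algorithm afterwards. (Your opening reduction to connected components is also unnecessary: the paper notes that non-degenerate $(m+2)$-angulations already have connected quivers, disconnected ones being degenerate by definition.)
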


In the remainder of this section we will calculate the AG-invariant for degenerate
$(m+2)$-angulations. We will do this by a process we call 
\df{bridging boundary components}.  In this process we will introduce new boundary 
segments connecting distinct boundary components of $P$.  This will decrease the number
of boundary components but will be defined so that the set $\frakB$ is extended in 
such a way that the bijection of Lemma~\ref{lem b} is true. In general, we are modifying
the surface so that the resulting $(m+2)$-angulation is no longer degenerate. 

\begin{dfn}\label{def bridge}
Let $\triangle$ be an $(m+2)$-gon which is bounded by $\{\frakb_1,\dots,\frakb_s\}\subset\frakB$
with $s>1$ and $\{\frakt_1,\dots,\frakt_r\}$ the boundary segments of $\triangle$ composed 
of arcs from $T$ with $\frakt_i$ incident to $\frakb_{i-1}$ and $\frakb_{i+1}$ 
(indices are considered modulo $r$).  In the example opening this section, $\frakt_1=\te_1$ and
$\frakt_2=\te_3$. We define the boundary bridge in $\triangle$ as follows.  
\begin{enumerate}
\item For each segment $\frakt_i$, let $\delta_i'$ and $\delta_i''$ be two curves on
$\frakb_{i-1}$ and $\frakb_{i+1}$ respectively starting at the endpoints of
$\frakt_i$ and denote by $v_i'$ and $v_i''$ their respective endpoints. Moreover, we
choose $\delta_i'$, $\delta_i''$ short enough such that $v_i'$ and $v_i''$ are not in
$M$ and no point of $M$ (other than the endpoints of $\frakt$) lie on the curves
$\delta_i'$, $\delta_i''$.
\item Let $\frakt_i'$ denote the arc (up to homotopy) in the interior of $P$ connecting $v_i'$ and $v_i''$.
This results in a new polygon $\triangle_i$ bounded by $\frakt_i$, $\delta_i'$, $\delta_i''$
and $\frakt_i'$.  
\item Add the appropriate number of marked points to $\frakt_i'$ so that  $\triangle_i'$ is a
$(m+2)$-gon. (Recall that we are assuming that $m>1$, so there is zero or more points that need 
to be added, we never need to remove points.)
\item We refer to the complement of the $\triangle_i'$s as the \df{boundary bridge} in 
$\triangle$.  Note that this includes the pieces of $\frakb_i$ not in $\delta_i'$ and $\delta_i''$ 
but does not include the arcs $\frakt'$.
\end{enumerate}
\end{dfn}

\begin{remark}\label{remark T}
Notice that this construction does not affect the set $T$ in any way. In particular, if
$\te_j$ and $\te_k$ are incident before creating the boundary bridges, then they are still
incident after the bridges are constructed.  
\end{remark}

\begin{ex}
Consider the $(4+2)$-angulation of the (genus 0) surface with 4 boundary components
in Figure~\ref{fig 4 cpts}. It is not difficult to see that the corresponding algebra this 
can also be realized from a non-degenerate triangulation of a disc. In this figure, this fact
is suggested by by the fact that the boundary bridges connect all of the original boundary 
components. If we cut out the boundary bridges, there will be a single boundary component. The
corresponding triangulation is then created by reducing the number or marked points until each
$(m+2)$-gon becomes a triangle. Notice that when we cut out the bridges, two of the original 
marked points are removed, so this operation while preserving $T$, does not preserve $M$ as a 
subset of the new marked point set.
\begin{figure}
%
%

\begin{tikzpicture}[scale=.4]
\draw (0,0) circle[radius=7] ;
\filldraw[fill=gray!30] (0:2.5cm) circle[radius=1cm] node[name=A] {};
\filldraw[fill=gray!30] (120:2.5cm) circle[radius=1cm] node[name=B] {};
\filldraw[fill=gray!30] (240:2.5cm) circle[radius=1cm] node[name=C] {};
\path (A) 
	+(30:1cm) node[solid,name=A1] {}
	+(90:1cm) node[solid,name=A2] {}
	+(270:1cm) node[solid,name=A3] {}
	+(330:1cm) node[solid,name=A4] {}
	+(120:1cm) node[pair,name=A5] {}
	+(220:1cm) node[pair,name=A6] {};

\path (B) 
	+(66:1cm) node[solid,name=B1] {}
	+(125:1cm) node[solid,name=B2] {}
	+(210:1cm) node[solid,name=B3] {}
	+(270:1cm) node[solid,name=B4] {}
	+(36:1cm) node[pair,name=B5] {}
	+(300:1cm) node[pair,name=B6] {}
	+(95.5:1cm) node[pair,name=B7] {}
	+(240:1cm) node[pair,name=B8] {};
\path (C) 
	+(100:1cm) node[solid,name=C1] {}
	+(210:1cm) node[solid,name=C2] {}
	+(290:1cm) node[solid,name=C3] {}
	+(0:1cm) node[solid,name=C4] {}
	+(30:1cm) node[pair,name=C5] {}
	+(60:1cm) node[pair,name=C6] {};
\path (0,0)
	+(90:7cm) node[solid,name=O1] {}
	+(180:7cm) node[solid,name=O2] {}
	+(260:7cm) node[solid,name=O3] {}
	+(320:7cm) node[solid,name=O4] {}
	+(100:7cm) node[pair,name=O5] {}
	+(170:7cm) node[pair,name=O6] {};

\draw 
	(B4) -- (C1)
	(C4) --(A3)
	(C1) -- (O2)
	(B1) -- (O1)
	(B1) ..controls +(30:2cm) and +(120:1cm) .. (A2) 
	 ..controls +(30:4cm) and +(120:2cm) .. (O4)
	 ..controls +(150:2cm) and +(340:2cm) .. (C4)
	 ..controls +(280:2cm) and +(80:1cm) .. (O3)
	 ..controls +(140:2cm) and +(290:2cm) .. (O2);
\begin{scope}[on background layer]
\filldraw[fill=gray!60!red] 
  (O5.center) -- (B7.center) node[pair,pos=.33]{} node[pair,pos=.66] {}
  arc[start angle=95.5, end angle=240,radius=1cm] 
  ..controls +(270:3cm) and +(0:2cm) .. (O6.center) 
  		node[pair,pos=.33]{} node[pair,pos=.66] {}
  arc[start angle=170,delta angle=-70,radius=7cm]
  (B6.center) -- (C6.center) node[pair,pos=.33]{} node[pair,pos=.66] {} 
  	arc[start angle=60, delta angle=-30,radius=1cm] 
  	-- (A6.center) node[pair,pos=.33]{} node[pair,pos=.66] {}
  	arc[start angle=220, delta angle=-100,radius=1cm] 
  	..controls +(100:1cm) and +(30:1.2cm).. (B5.center) 
  		node[pair,pos=.33]{} node[pair,pos=.66] {}
  	arc[start angle=36,delta angle=-96,radius=1cm];

\end{scope}

\foreach \a in {15,45,200,215,230,245,260, 270,285,295,310}
	\node[solid] at (\a:7cm) {};

\end{tikzpicture}
\caption{The $(4+2)$-angulation is degenerate, with two 6-gons containing boundary bridges.
The white marked points are those added while constructing the bridges and the dark
red-grey area is the interior of the bridges.}
\label{fig 4 cpts}
\end{figure}
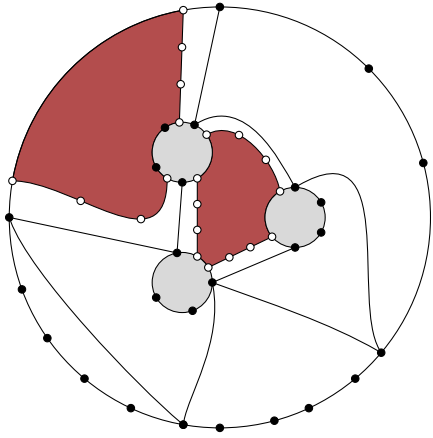

\end{ex}

\begin{dfn}
Let $(P,M,T)$ be a degenerate $(m+2)$-angultion. Define $(\tilde P,\tilde M,\tilde T)$ to be the
$(m+2)$-angulation constructed by removing all boundary bridges from $(P,M,T)$.  For compactness of 
notation we use the following convention that $(\tilde P, \tilde M, \tilde T) = (P,M,T)$ when the 
$(m+2)$-angulation is non-degenerate, in this case there are no boundary bridges and hence nothing 
to remove. 
\end{dfn}

Notice that if $T$ is connected, then $\tilde P$ will be a connected surface. On the other hand,
if $T$ is not connected, then removing the boundary bridges will result in a surface
which is not connected. In either case, the removal of the boundary bridges has a significant
impact on the number of boundary components and the set $\frakB$.  Let $\tilde\frakB$ denote the 
set of boundary segments with endpoints in $\tilde M_T$, as in Definition~\ref{def B segs}.
The set $\tilde\frakB$ consists of the segments in $\frakB$ plus the segments 
$\delta_i'\frakt_i'\delta_i''$ minus the the $\frakb_i$ involved in constructing the boundary 
bridges.  By construction of the bridges, each $\triangle_i'$ will contain exactly one element of 
$\tilde\frakB$; further, by definition the $(m+2)$-gons of $(P,M,T)$ that do not contain boundary
bridges are bounded by at most one element from $\frakB$ which is not affected in 
the construction of $\tilde P$, so are bounded by at most one element from $\tilde\frakB$.  As a 
result we have the following lemma.

\begin{lemma}
The $(m+2)$-angulation $(\tilde P,\tilde M, \tilde T)$ is  non-degenerate.
\end{lemma}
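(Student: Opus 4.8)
The plan is to show that every $(m+2)$-gon of $(\tilde P, \tilde M, \tilde T)$ contains at most one element of $\tilde\frakB$, which is exactly the definition of non-degeneracy. The $(m+2)$-gons of $(\tilde P, \tilde M, \tilde T)$ split into two families: those that came from an $(m+2)$-gon of $(P,M,T)$ unaffected by any bridge construction, and the new polygons $\triangle_i'$ created inside each degenerate $(m+2)$-gon $\triangle$ during Definition~\ref{def bridge}. I would handle these two families separately, using the structural description of $\tilde\frakB$ given just before the statement, namely that $\tilde\frakB$ consists of the unaffected segments of $\frakB$, together with the new segments $\delta_i'\frakt_i'\delta_i''$, and with the $\frakb_i$ used in bridging removed.

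For the first family, I would argue as follows. Any $(m+2)$-gon $\triangle_0$ of $(P,M,T)$ that is not one of the degenerate $(m+2)$-gons on which we performed a bridge construction is, by the hypothesis that we only bridge $(m+2)$-gons bounded by more than one element of $\frakB$ (Definition~\ref{def bridge}) and by the structure of a general $(m+2)$-angulation, bounded by at most one element of $\frakB$. The bridge construction does not alter $\triangle_0$ itself (it only modifies the interiors of the degenerate $(m+2)$-gons and the portions of the boundary segments $\frakb_{i\pm1}$ near $\triangle$), nor does it alter $T$ (Remark~\ref{remark T}); so $\triangle_0$ survives unchanged as an $(m+2)$-gon of $(\tilde P,\tilde M, \tilde T)$. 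Its unique (or zero) bounding element of $\frakB$ either is untouched—hence lies in $\tilde\frakB$—or is one of the $\frakb_i$ removed during bridging—hence $\triangle_0$ is bounded by no element of $\tilde\frakB$. Either way $\triangle_0$ meets $\tilde\frakB$ in at most one segment.

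For the second family, the key observation is built into step (2) of Definition~\ref{def bridge}: each new polygon $\triangle_i$ is bounded by $\frakt_i$ (an arc-segment from $T$, contributing nothing to $\frakB$), the two short curves $\delta_i'$ and $\delta_i''$, and the new arc $\frakt_i'$. After step (3) adds marked points to make it an $(m+2)$-gon, its only boundary portion that can contribute to $\tilde\frakB$ is the concatenation $\delta_i'\frakt_i'\delta_i''$, which by construction is a single element of $\tilde\frakB$ (its endpoints are the endpoints of $\frakt_i$, which lie in $\tilde M_T$ since $\frakt_i$ is built from arcs of $T$, and the freshly added marked points on $\frakt_i'$ are not incident to $T$, hence not in $\tilde M_T$). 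So each $\triangle_i'$ meets $\tilde\frakB$ in exactly one element. Combining the two families exhausts all $(m+2)$-gons of $(\tilde P,\tilde M,\tilde T)$ and gives non-degeneracy.

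The main obstacle, and the only point needing care, is checking that no \emph{new} coincidence is introduced: one must verify that an $(m+2)$-gon of the first family is never secretly adjacent (through the boundary surgery) to one of the new $\tilde\frakB$-segments $\delta_i'\frakt_i'\delta_i''$, and that the leftover pieces of the old $\frakb_i$ sitting in the ``boundary bridge'' region really are removed from the surface rather than reattached as boundary of some surviving $(m+2)$-gon. This is guaranteed because step (4) of Definition~\ref{def bridge} declares the complement of the $\triangle_i'$ (including those leftover pieces of $\frakb_i$, but not the $\frakt_i'$) to be the boundary bridge, which is exactly what gets excised when forming $\tilde P$; so the new segments $\delta_i'\frakt_i'\delta_i''$ bound only the $\triangle_i'$ on the $\tilde P$ side, and the removed $\frakb_i$ pieces bound nothing. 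Once this bookkeeping is made explicit the lemma follows, and I would keep the write-up to essentially the two-family case analysis above.
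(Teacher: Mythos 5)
Your proof is correct and takes essentially the same approach as the paper, which establishes the lemma in the paragraph immediately preceding its statement via the same two-family analysis: the new polygons $\triangle_i'$ each meet $\tilde\frakB$ in exactly the one segment $\delta_i'\frakt_i'\delta_i''$, while the unbridged $(m+2)$-gons were already bounded by at most one element of $\frakB$ and are untouched by the construction. Your extra bookkeeping about the excised pieces of the $\frakb_i$ is a harmless elaboration of the same argument.
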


\begin{lemma}\label{lem degenerate Q}
The quivers with relations $(Q_T,I_T)$ and $(Q_{\tilde T},I_{\tilde T})$ are equal. 
\end{lemma}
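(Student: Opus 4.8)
The plan is to show that removing the boundary bridges from a degenerate $(m+2)$-angulation does not alter any of the combinatorial data used to define $(Q_T,I_T)$ in Definition~\ref{def Qt}. Recall that $(Q_T)_0$ is indexed by the arcs of $T$, the arrows of $Q_T$ record the angles between arcs of $T$ that share a marked point and bound a common $(m+2)$-gon, and the relations in $I_T$ record consecutive such angles inside a single $(m+2)$-gon. Since none of this data involves the boundary segments of $P$ directly — only arcs in $T$ and the $(m+2)$-gons they bound — it suffices to check that passing from $(P,M,T)$ to $(\tilde P,\tilde M,\tilde T)$ preserves: (i) the set $T$ itself (as an abstract set of arcs), (ii) the incidence relation ``$\tau_i$ and $\tau_j$ share a marked point'', and (iii) for each pair of incident arcs, whether or not they bound a common $(m+2)$-gon and in what cyclic order.

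First I would invoke Remark~\ref{remark T}: by construction of the boundary bridge in Definition~\ref{def bridge}, the curves $\delta_i',\delta_i''$ and the new arcs $\frakt_i'$ are chosen in small neighbourhoods of the old boundary segments, away from the arcs of $T$ and their endpoints in $M_T$; hence $T \subseteq \tilde M$ literally as a set of arcs, establishing (i), and moreover two arcs of $T$ that were incident at a marked point $p\in M_T$ remain incident at that same point $p$, establishing (ii). For (iii), I would argue that the only $(m+2)$-gons of $(P,M,T)$ that are modified are the degenerate ones $\triangle$ bounded by several segments of $\frakB$; inside such a $\triangle$, the bridge construction cuts $\triangle$ into the smaller $(m+2)$-gons $\triangle_i'$, each bounded by one segment $\frakt_i$ consisting of arcs of $T$, together with $\delta_i',\delta_i'',\frakt_i'$. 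The key point is that the arcs of $T$ appearing on the boundary of $\triangle$ — namely the segments $\frakt_1,\dots,\frakt_r$ — are distributed one-to-one among the $\triangle_i'$, so that every angle between two arcs of $T$ that occurred inside $\triangle$ still occurs inside exactly one $\triangle_i'$, in the same cyclic order, and no new angle between arcs of $T$ is created (the new angles in $\triangle_i'$ all involve at least one of the boundary curves $\delta_i',\delta_i''$ or $\frakt_i'$). The $(m+2)$-gons not containing any bridge are untouched.

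Combining these three observations, the bijection $(Q_T)_0 \leftrightarrow T \leftrightarrow (Q_{\tilde T})_0$ is the identity, the arrows of $Q_T$ and $Q_{\tilde T}$ coincide (same angles between the same arcs of $T$, same orientation convention), and the generating relations of $I_T$ and $I_{\tilde T}$ coincide (two composable arrows lie in a relation iff they are consecutive angles of a single $(m+2)$-gon built from arcs of $T$, and such configurations are in bijection). Hence $(Q_T,I_T)=(Q_{\tilde T},I_{\tilde T})$.

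The main obstacle I anticipate is being careful about \emph{angles at the endpoints of the $\frakt_i$}: at a marked point $p\in M_T$ that is an endpoint of some $\frakt_i$, the bridge construction splits the local picture at $p$ by inserting $\delta_i'$ or $\delta_i''$, and one must verify that this does not accidentally identify or separate angles between arcs of $T$ that meet at $p$. The resolution is that $\delta_i'$ runs along the old boundary segment $\frakb_{i-1}$ emanating from $p$, so it lies in the ``boundary wedge'' at $p$ and does not enter the interior region where the arcs of $T$ sit; thus the cyclic order of arcs of $T$ around $p$, and which consecutive pair bounds a common $(m+2)$-gon, is unchanged. Once this local analysis at the endpoints of the $\frakt_i$ is pinned down, the rest is bookkeeping.
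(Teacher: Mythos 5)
Your proposal is correct and follows essentially the same route as the paper's own proof: both reduce the claim to the observation (Remark~\ref{remark T}) that the bridge construction leaves the set $T$ and the incidence of its arcs untouched, so the vertices, arrows, and relations of the two bound quivers coincide. You simply carry out in detail the local verification at the endpoints of the $\frakt_i$ that the paper treats as immediate (note the small typo: you wrote $T\subseteq\tilde M$ where you mean $T\subseteq\tilde T$).
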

\begin{proof}
This follows immediately from Remark~\ref{remark T}. The construction of the boundary 
bridges does not change the set $T$, so there is a clear bijection between $T$ and $\tilde T$.  
It follows that the set of vertices in $Q_T$ and $Q_{\tilde T}$ are the same. Similarly, the 
incidence of arcs in $T$ is not impacted by the construction of the bridges, so the
set of arrows and the set of relations is also the same. 
\end{proof}

\begin{thm}
Let $(P,M,T)$ be any $(m+2)$-angulation and $\alg$ the corresponding algebra. 
The AG-invariant of $\alg$ is given by
\[t(0,m+2)^* + \sum_{i}(a_i,b_i)^* \] 
where $t$ is number of internal $(m+2)$-gons in $(P,M,T)$, 
$i$ indexes the boundary components of $\tilde P$, and 
\[ a_i = \# \tilde M_T^i, \quad \text{and}\quad b_i= \sum_{\frakb_j\in\tilde\frakB_i}(m-w(\frakb_j)).\]
\end{thm}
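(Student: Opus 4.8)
The plan is to reduce the general (possibly degenerate) case to the non-degenerate case already handled by Theorem~\ref{main thm 2}, using the bridging construction. First I would invoke Lemma~\ref{lem degenerate Q} to identify the bound quivers: since $(Q_T,I_T) = (Q_{\tilde T},I_{\tilde T})$, the algebra $\alg$ is literally the same algebra whether we view it as arising from $(P,M,T)$ or from $(\tilde P,\tilde M,\tilde T)$. In particular its AG-invariant depends only on $(Q_{\tilde T},I_{\tilde T})$, so it suffices to compute the AG-invariant of the algebra attached to the $(m+2)$-angulation $(\tilde P,\tilde M,\tilde T)$.

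Next I would apply the preceding lemma showing that $(\tilde P,\tilde M,\tilde T)$ is non-degenerate, which puts us exactly in the hypotheses of Theorem~\ref{main thm 2}. Applying that theorem gives
\[\AG(\alg) = \tilde t\,(0,m+2)^* + \sum_{i}(a_i,b_i)^*,\]
where $i$ ranges over the boundary components of $\tilde P$, $a_i = \#\tilde M_T^i$, and $b_i = \sum_{\frakb_j\in\tilde\frakB_i}(m-w(\frakb_j))$, and $\tilde t$ is the number of internal $(m+2)$-gons of $(\tilde P,\tilde M,\tilde T)$. This already matches the claimed formula except for the identification of the cycle-count: I need to check that $\tilde t = t$, i.e.\ that bridging does not create or destroy internal $(m+2)$-gons. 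This is the one point requiring a genuine (if short) argument: by the description of the bridging construction in Definition~\ref{def bridge}, the operation only modifies those $(m+2)$-gons $\triangle$ bounded by more than one element of $\frakB$ — replacing each such $\triangle$ by the polygons $\triangle_i'$, each of which is bounded by a boundary segment and hence is \emph{not} internal — and leaves every other $(m+2)$-gon untouched. Since an internal $(m+2)$-gon is bounded entirely by arcs of $T$ and therefore by \emph{no} element of $\frakB$, it is never one of the $\triangle$'s being modified; thus the internal $(m+2)$-gons of $(P,M,T)$ and of $(\tilde P,\tilde M,\tilde T)$ coincide, giving $\tilde t = t$.

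The main obstacle I anticipate is precisely this last bookkeeping step, together with the discussion (already carried out in the paragraph preceding the lemma) that $\tilde\frakB$ is the correct set — namely $\frakB$ with each bridged $\frakb_i$ removed and each $\delta_i'\frakt_i'\delta_i''$ added — and that $w$ is computed as before, so that the terms $(a_i,b_i)^*$ in the output of Theorem~\ref{main thm 2} are exactly the ones in the statement. Once these identifications are in place, and granting the convention that $(\tilde P,\tilde M,\tilde T)=(P,M,T)$ when no bridges are present, the proof is just the chain: same algebra $\Rightarrow$ same AG-invariant (Lemma~\ref{lem degenerate Q}), $(\tilde P,\tilde M,\tilde T)$ non-degenerate $\Rightarrow$ apply Theorem~\ref{main thm 2}, plus $\tilde t=t$. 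Everything else is a routine unwinding of the definitions.
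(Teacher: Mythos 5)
Your proposal is correct and follows essentially the same route as the paper's own proof: identify the bound quivers via Lemma~\ref{lem degenerate Q}, note that $(\tilde P,\tilde M,\tilde T)$ is non-degenerate, and apply Theorem~\ref{main thm 2}. Your additional check that bridging preserves the count of internal $(m+2)$-gons (so $\tilde t = t$) is a detail the paper leaves implicit, and your argument for it is sound.
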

\begin{proof}
If $(P,M,T)$ is non-degenerate, then this immediately reduces to Theorem~\ref{main thm 2}.  On
the other hand, when $(P,M,T)$ is degenerate, then  from Lemma~\ref{lem degenerate Q} we see
that $\alg$ also comes from $(\tilde P,\tilde M,\tilde T)$, which is non-degenerate. We can 
then apply Theorem~\ref{main thm 2} to $(\tilde P,\tilde M,\tilde T)$ to get the desired formula. 
\end{proof}

\printbibliography

\end{document}